\newtheorem{thm}{Theorem}
\newtheorem{cor}[thm]{Corollary}
\newtheorem{lem}[thm]{Lemma}
\newtheorem{prop}[thm]{Proposition}
\newtheorem{remark}[thm]{Remark}
\newcommand{\R}{{\mathbb R}}
\renewcommand{\S}{{\mathbb S}}
\newcommand{\N}{{\mathbb N}}
\newcommand{\be}[1]{\begin{equation}\label{#1}}
\newcommand{\ee}{\end{equation}}
\renewcommand{\(}{\left(}
\renewcommand{\)}{\right)}
\newcommand{\eps}{\varepsilon}
\newcommand{\seq}[2]{({#1}_{#2})_{#2\in\N}}
\renewcommand{\S}{\mathbb{S}}
\newcommand{\iS}[1]{\int_{\S^d}{#1}\;d\sigma}
\newcommand{\iRd}[1]{\int_{\R^d}{#1}\;dx}
\newcommand{\nrm}[2]{\|{#1}\|_{\L^{#2}(\S^d)}}
\newcommand{\nrmRd}[2]{\|{#1}\|_{\L^{#2}(\R^d)}}
\renewcommand{\H}{\mathrm H}
\renewcommand{\L}{\mathrm L}
\begin{document}
\title[Spectral estimates on the sphere]{Spectral estimates on the sphere}
\author[J.~Dolbeault , M.J.~Esteban, A.~Laptev]{Jean Dolbeault, Maria J.~Esteban, and Ari Laptev}
\address{J.~Dolbeault \& M.J.~Esteban: Ceremade CNRS UMR 7534, Universit\'e Paris-Dauphine, Place de Lattre de Tassigny, 75775 Paris C\'edex~16, France.
{\sl E-mail addresses:\/} \rm\textsf{dolbeaul@ceremade.dauphine.fr, esteban@ceremade.dauphine.fr}}
\address{A.~Laptev: Department of Mathematics, Imperial College London, Huxley Building, 180 Queen's Gate, London SW7 2AZ, UK.
{\sl E-mail address:\/} \rm\textsf{a.laptev@imperial.ac.uk}}
\date{\today}

\begin{abstract}
In this article we establish optimal estimates for the first eigenvalue of Schr\"odinger operators on the $d$-dimensional unit sphere. These estimates depend on $\L^p$ norms of the potential, or of its inverse, and are equivalent to interpolation inequalities on the sphere. We also characterize a semi-classical asymptotic regime and discuss how our estimates on the sphere differ from those on the Euclidean space.
\end{abstract}

\keywords{Spectral problems; Partial differential operators on manifolds; Quantum theory; Estimation of eigenvalues; Sobolev inequality; interpolation; Gagliardo-Nirenberg-Sobolev inequalities; logarithmic Sobolev inequality; Schr\"odinger operator; ground state; one bound state Keller-Lieb-Thirring inequality\newline
{\it Mathematics Subject Classification (2010). Primary:\/}
58J50; 81Q10; 81Q35; 35P15.
{\it Secondary:\/ }
47A75; 26D10; 46E35; 58E35; 81Q20}
\maketitle
\thispagestyle{empty}

\section{Introduction}\label{Sec:Intro}

Let $\Delta$ be the Laplace-Beltrami operator on the unit $d$-dimensional sphere $\S^d$. Our first result is concerned with the sharp estimate of the first negative eigenvalue $\lambda_1=\lambda_1(-\Delta-V)$ of the Schr\"odinger operator $-\Delta-V$ on $\S^d$ (with potential $-V$) in terms of $\L^p$-norms of $V$.

The literature on spectral estimates for the negative eigenvalues of Schr\"odinger operators on manifolds is limited. We can quote two papers of P.~Federbusch and O.S.~Rothaus, \cite{federbush1969partially,Rothaus-81}, which establish a link between logarithmic Sobolev inequalities and the ground state energy of Schr\"odinger operators. The Rozenbljum-Lieb-Cwikel inequality (case $\gamma=0$ with standard notations: see below) on manifolds has been studied in \cite[Section 5]{MR1454250}; we may also refer to \cite{MR0407909} for the semi-classical regime, and to \cite{MR2213030,MR2639180} for more recent results in this direction. In two articles (see \cite{MR121812,MR2879309}) on Lieb-Thirring type inequalities (also see \cite{MR2213030,MR2639180} for other results on manifolds), A.~Ilyin considers Schr\"odinger operators on unit spheres restricted to the space of functions orthogonal to constants and uses the original method of E.~Lieb and W.~Thirring in \cite{Lieb-Thirring76}. The exclusion of the zero mode of the Laplace-Beltrami operator results in \emph{semi-classical} estimates similar to those for negative eigenvalues of Schr\"odinger operators in Euclidean spaces.

The results in this paper are somewhat complementary. We show that if the $\L^p$-norm of~$V$ is smaller than an explicit value, then the first eigenvalue $\lambda_1(-\Delta-V)$ cannot satisfy the semi-classical inequality and thus it is impossible to obtain standard Lieb-Thirring type inequalities for the whole negative spectrum. However, we show that if the $\L^p$-norm of the potential is large then the first eigenvalue behaves semi-classically and the best constant in the inequality asymptotically coincides with the best constants $\L_{\gamma,d}^1$ of the corresponding inequality in the Euclidean space of same dimension (see below). In this regime the first eigenfunction is concentrated around some point on $\S^d$ and can be identified with an eigenfunction of the Schr\"odinger operator on the tangent space, up to a small error. In Appendix~\ref{Sec:AppendixA}, we illustrate the transition between the small $\L^p$-norm regime and the asymptotic, semi-classical regime by numerically computing the optimal estimates for the eigenvalue $\lambda_1(-\Delta -V)$ in terms of the norms $\nrm Vp$.

In order to formulate our first theorem let us introduce the measure $d\omega$ induced by Lebesgue's measure on $\S^d\subset\R^{d+1}$ and the uniform probability measure $d\sigma=d\omega/|\S^d|$ with $|\S^d|=\omega(\S^d)$. We shall denote by $\nrm\cdot q$ the quantity $\nrm uq=\big(\iS{|u|^q}\big){}^{1/q}$ for any $q>0$ (hence including in the case $q\in(0,1)$, for which $\nrm\cdot q$ is not anymore a norm, but only a quasi-norm). Because of the normalization of $d\sigma$, when making comparisons with corresponding results in the Euclidean space, we will need the constant
\[
\kappa_{q,d}:=|\S^d|^{1-\frac2q}\,.
\]
The well-known optimal constant $\L_{\gamma,d}^1$ in the one bound state Keller-Lieb-Thirring inequality is defined as follows: for any function $\phi$ on $\R^d$, if $\lambda_1(-\Delta-\phi)$ denotes the lowest negative eigenvalue of the Schr\"odinger operator $-\Delta-\phi$ (with potential $-\phi$) when it exists, and $0$ otherwise, we have
\be{Lgamma,d}
|\lambda_1(-\Delta-\phi)|^\gamma\le\L_{\gamma,d}^1\iRd{\phi_+^{\gamma+\frac d2}}\,,
\ee
provided $\gamma\ge0$ if $d\ge3$, $\gamma>0$ if $d=2$, and $\gamma\ge1/2$ if $d=1$. Notice that only the positive part $\phi_+$ of $\phi$ is involved in the right-hand side of the above inequality. Assuming that $\gamma>1-d/2$ if $d=1$ or $2$, we shall consider the exponents
\[
q=2\,\frac{2\,\gamma+d}{2\,\gamma+d-2}\quad\mbox{and}\quad p=\frac q{q-2}=\gamma+\frac d2\,,
\]
which are therefore such that $2<q=\frac{2\,p}{p-1}\le2^*$ with $2^*:=\frac{2\,d}{d-2}$ if $d\ge3$, and $q=\frac{2\,p}{p-1}\in(2,+\infty)$ if $d=1$ or $2$. To simplify notations, we adopt the convention $2^*:=\infty$ if $d=1$ or $2$. It is also convenient to introduce the notation
\[
\alpha_*:=\frac14\,d\,(d-2)\;.
\]
In Section \ref{Sec:Interpolationq>2} we shall prove the following result.
\begin{thm}\label{Thm1} Let $d\ge1$, $p\in\big(\max\{1,d/2\},+\infty\big)$. Then there exists a convex increasing function $\alpha:\R^+\to\R^+$ with $\alpha(\mu)=\mu$ for any $\mu\in\big[0,\tfrac d2\,(p-1)\big]$ and $\alpha(\mu)>\mu$ for any $\mu\in\big(\tfrac d2\,(p-1),+\infty\big)$, such that
\be{T11}
|\lambda_1(-\Delta-V)|\le\alpha\big(\nrm Vp\big)
\ee
for any nonnegative $V\in\L^p(\S^d)$. Moreover, for large values of $\mu$, we have
\[\label{T12}
\alpha(\mu)^{p-\frac d2}=\L_{p-\frac d2,d}^1\,(\kappa_{q,d}\,\mu)^p\,(1+o(1))\,.
\]
The estimate \eqref{T11} is optimal in the sense that there exists a nonnegative function $V$ such that $\mu=\nrm Vp$ and $|\lambda_1(-\Delta-V)|=\alpha(\mu)$ for any $\mu\in\big(\frac d2\,(p-1),+\infty\big)$. If $\mu\le\frac d2\,(p-1)$, equality in~\eqref{T11} is achieved by constant potentials.

If $p=d/2$ and $d\ge3$, then \eqref{T11} is satisfied with $\alpha(\mu)=\mu$ only for $\mu\in[0,\alpha_*]$. If $d=p=1$, then \eqref{T11} is also satisfied for some nonnegative, convex function $\alpha$ on $\R^+$ such that $\mu\le\alpha(\mu)\le\mu+\pi^2\,\mu^2$ for any $\mu\in(0,+\infty)$, equality in \eqref{T11} is achieved and $\alpha(\mu)=\pi^2\,\mu^2(1+o(1))$ as $\mu\to+\infty$.
\end{thm}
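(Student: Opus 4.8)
The plan is to prove Theorem~\ref{Thm1} through the duality between the eigenvalue estimate \eqref{T11} and a one-parameter family of interpolation inequalities on $\S^d$. First I would observe that, for $V\ge0$, the Rayleigh quotient gives $|\lambda_1(-\Delta-V)|=\sup_{u\neq0}\big(\iS{V\,|u|^2}-\nrm{\nabla u}2^2\big)\big/\nrm u2^2$, and that H\"older's inequality with conjugate exponents $p$ and $q/2=p/(p-1)$ — together with its equality case — gives $\sup\{\iS{V\,|u|^2}:V\ge0,\ \nrm Vp\le\mu\}=\mu\,\nrm uq^2$. Exchanging the two suprema, I would \emph{define}
\[
\alpha(\mu):=\sup_{u\in\H^1(\S^d)\setminus\{0\}}\frac{\mu\,\nrm uq^2-\nrm{\nabla u}2^2}{\nrm u2^2}=\sup\big\{|\lambda_1(-\Delta-V)|:V\ge0,\ \nrm Vp\le\mu\big\}\,,
\]
so that \eqref{T11} holds with this $\alpha$ and is optimal by construction; the Euler--Lagrange equation for a normalised maximiser $u_\mu$ then identifies the optimal potential as $V=\mu\,\nrm{u_\mu}q^{2-q}\,|u_\mu|^{q-2}$, of $\L^p$ norm exactly $\mu$. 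Since $\mu\mapsto\big(\mu\,\nrm uq^2-\nrm{\nabla u}2^2\big)\big/\nrm u2^2$ is affine with slope $\nrm uq^2/\nrm u2^2\ge1$ (Jensen, as $q>2$ and $\sigma$ is a probability measure), $\alpha$ is automatically convex and increasing with $\alpha(\mu)\ge\mu$; its finiteness for $q<2^*$ follows from $\nrm uq^2\le\eps\,\nrm{\nabla u}2^2+C_\eps\,\nrm u2^2$.

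Next I would pin down the threshold $\tfrac d2(p-1)=\tfrac d{q-2}$. For $\mu$ below it, the optimal interpolation inequality on the sphere (Bidaut-V\'eron--V\'eron, Beckner, Bakry), $\mu\,\nrm uq^2\le\nrm{\nabla u}2^2+\mu\,\nrm u2^2$ for $2<q\le2^*$ with equality for constants, gives $\alpha(\mu)=\mu$, with equality in \eqref{T11} for constant $V$. For $\mu$ above it, expanding the quotient at $u\equiv1$ along a first spherical harmonic $\phi$ ($-\Delta\phi=d\,\phi$) yields $\mu+\big(\mu(q-2)-d\big)\,\eps^2\nrm\phi2^2+o(\eps^2)>\mu$, so constants are not optimal and $\alpha(\mu)>\mu$; when $q<2^*$ the compact embedding $\H^1(\S^d)\hookrightarrow\L^q(\S^d)$ supplies a maximiser $u_\mu$, hence an optimal $V$ with $\nrm Vp=\mu$ realising equality in \eqref{T11}.

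The core of the argument is the semi-classical regime $\mu\to+\infty$. Writing $\gamma:=p-\tfrac d2>0$, I would introduce the Euclidean companion $\beta(\nu):=\sup_{w\in\H^1(\R^d)}\big(\nu\,\nrmRd wq^2-\nrmRd{\nabla w}2^2\big)\big/\nrmRd w2^2$; the dilation $w(x)\mapsto w(\lambda x)$ shows $\beta(\nu)^\gamma=\L_{\gamma,d}^1\,\nu^p$ — the dual form of \eqref{Lgamma,d} — and that a maximiser of $\beta(\nu)$ concentrates on a length scale tending to $0$ as $\nu\to+\infty$. For the \emph{lower} bound $\alpha(\mu)\ge\beta(\kappa_{q,d}\,\mu)(1+o(1))$ I would transplant the maximiser of $\beta(\kappa_{q,d}\mu)$ onto $\S^d$ through geodesic normal coordinates around a point, with a cutoff: as it concentrates the metric is flat to leading order, and the ratio of the $\L^q(\S^d,\sigma)$ and $\L^2(\S^d,\sigma)$ normalisations produces exactly the factor $|\S^d|^{1-2/q}=\kappa_{q,d}$. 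For the \emph{upper} bound $\alpha(\mu)\le\beta(\kappa_{q,d}\,\mu)(1+o(1))$ I would start from the sphere maximiser $u_\mu$ with $\nrm{u_\mu}2=1$: the lower bound just obtained forces $\alpha(\mu)/\mu\to+\infty$, so from $\mu\,\nrm{u_\mu}q^2\ge\alpha(\mu)$ and $\nrm{u_\mu}q^q\le\|u_\mu\|_\infty^{q-2}$ we get $\|u_\mu\|_\infty\to+\infty$; rescaling $u_\mu$ around a maximum point at the natural (vanishing) length scale, a blow-up/concentration--compactness analysis should show that the rescaled functions converge in $\H^1_{\mathrm{loc}}$ to a maximiser of $\beta$, with convergence of the Dirichlet, $\L^q$ and $\L^2$ energies and of the rescaled metric to the Euclidean one, so that the two bounds together give $\alpha(\mu)^{p-\frac d2}=\L_{p-\frac d2,d}^1\,(\kappa_{q,d}\,\mu)^p\,(1+o(1))$. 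I expect this blow-up step to be the main obstacle: one must rule out vanishing and dichotomy for $u_\mu$ (reducing, via the variational definition of $\alpha$, to a single concentrating profile) and control the curvature error of $\S^d$ in the rescaled variational problem uniformly in $\mu$.

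Finally I would treat the two degenerate cases. If $p=d/2$ and $d\ge3$, then $q=2^*$: the sharp Sobolev inequality on $\S^d$ gives $\alpha(\mu)=\mu$ for $\mu\le\alpha_*=\tfrac14d(d-2)=\tfrac d2(p-1)$, while for $\mu>\alpha_*$ testing the quotient with Aubin--Talenti bubbles concentrating at a point shows $\alpha(\mu)=+\infty$, so \eqref{T11} holds with $\alpha(\mu)=\mu$ exactly on $[0,\alpha_*]$. If $d=p=1$, then $q=+\infty$ and $\alpha(\mu)=\sup_u\big(\mu\,\|u\|_{\L^\infty(\S^1)}^2-\|u'\|_{\L^2(\S^1)}^2\big)\big/\|u\|_{\L^2(\S^1)}^2$ is dual to $|\lambda_1(-u''-V)|$ over $\|V\|_{\L^1(\S^1)}\le\mu$; the optimal potential being a Dirac mass, I would solve $-u''-c\,\delta_{x_0}u=\lambda u$ explicitly on the circle, obtaining $\alpha(\mu)=\kappa^2$ with $\kappa\tanh(\pi\kappa)=\pi\mu$, whence convexity, the bounds $\mu\le\alpha(\mu)\le\mu+\pi^2\mu^2$ (the upper one equivalent to $x\le\sinh x$), the attainment of equality in \eqref{T11}, and $\alpha(\mu)=\pi^2\mu^2(1+o(1))$ — consistent with the general asymptotics since $\L_{1/2,1}^1=\tfrac12$ and $\kappa_{\infty,1}=2\pi$.
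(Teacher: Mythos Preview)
Your proposal is correct and follows the same overall architecture as the paper: the duality between \eqref{T11} and the interpolation inequality~\eqref{InterpSphere}, the rigidity result of Bidaut-V\'eron--V\'eron for the threshold, the test with a first spherical harmonic above it, and a concentration--compactness/blow-up argument for the semi-classical asymptotics. The only packaging difference in the generic case is that the paper works with the function $\mu(\alpha)$ of Lemma~\ref{Lem:q>2} and then inverts it, while you define $\alpha(\mu)$ directly as a supremum; and the paper uses the stereographic projection (Appendix~\ref{Sec:Stereographic}) rather than geodesic normal coordinates for the transplantation and blow-up steps---either works and produces the same factor~$\kappa_{q,d}$.

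Where you genuinely diverge is the endpoint $d=p=1$. The paper does not compute $\alpha(\mu)$ there: it proves $\mu\le\alpha(\mu)\le\mu+\pi^2\mu^2$ by the elementary Cauchy--Schwarz/Agmon estimate $\big|\,|u(x)|^2-1\,\big|\le 2\pi\,\|u'\|_{\L^2(\S^1)}$ and then optimises a quadratic in $\|u'\|_{\L^2(\S^1)}$. Your route---passing to the extremal measure $V=2\pi\mu\,\delta_{x_0}$ and solving $-u''-2\pi\mu\,\delta_{x_0}u=-\kappa^2 u$ on $\S^1$ to get $\kappa\tanh(\pi\kappa)=\pi\mu$---is sharper: it gives an explicit closed form for $\alpha(\mu)$ from which the two-sided bound and the asymptotics $\alpha(\mu)\sim\pi^2\mu^2$ follow immediately (and your reduction of the upper bound to $x\le\sinh x$ is correct). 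The only caveat is that the Dirac mass is not in $\L^1(\S^1)$, so strictly speaking equality in \eqref{T11} is attained only in the limit along approximating $\L^1$ potentials; you should say this explicitly, since the general optimal-potential identification $V=\mu\,|u|^{q-2}$ used for $q<\infty$ does not survive the passage $q\to\infty$.
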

Since $\lambda_1(-\Delta-V)$ is nonpositive for any nonnegative, nontrivial $V$, inequality~\eqref{T11} is a lower estimate. We have indeed found that
\[
0\ge\lambda_1(-\Delta-V)\ge-\,\alpha\big(\nrm Vp\big)\,.
\]
If $V$ changes sign, the above inequality still holds if $V$ is replaced by the positive part $V_+$ of $V$, provided the lowest eigenvalue is negative. We can then write
\be{KLT}
|\lambda_1(-\Delta-V)|\le\alpha\(\nrm{V_+}p\)\quad\forall\,V\in\L^p(\S^d)\,.
\ee
The expression of $\L_{\gamma,d}^1$ is not explicit (except in the case $d=1$: see \cite[p.~290]{Lieb-Thirring76}) but can be given in terms of an optimal constant in some Gagliardo-Nirenberg-Sobolev inequality (see~\cite{Lieb-Thirring76}, and \eqref{Ineq:GNS}--\eqref{Eq:KLT-GNS} below in Section~\ref{Sec:GNS-Euclidean}). In case $d=p=1$, notice that $\L_{1/2,1}^1=1/2$ (see Appendix~\ref{Sec:AsympGNS}) and $\kappa_{\infty,1}=2\pi$ so that our formula in the asymptotic regime $\mu\to+\infty$ is consistent with the other cases.

The reader is invited to check that Theorem~\ref{Thm1} can be reformulated in a more standard language of spectral theory as follows. We recall that $\gamma=p-d/2$ and that $d\omega$ is the standard measure induced on the unit sphere $\S^d$ by Lebesgue's measure on $\R^{d+1}$.
\begin{cor}\label{cor:negpot} Let $d\ge1$ and consider a nonnegative function $V$. For $\mu=\nrm V{\gamma+\frac d2}$ large, we have
\be{Lgammap1Largemu}
|\lambda_1(-\Delta-V)|^\gamma\lesssim\L_{\gamma,d}^1\,\int_{\S^d}V^{\gamma+\frac d2}\;d\omega
\ee
if either $\gamma > \max\{0,1-d/2\}$ or $\gamma=1/2$ and $d=1$. However, if $\mu=\nrm V{\gamma+\frac d2}\le\frac14\,d\,(2\,\gamma+d-2)$, then we have
\be{Lgammap1Smallmu}
|\lambda_1(-\Delta-V)|^{\gamma+\frac d2}\le\int_{\S^d}V^{\gamma+\frac d2}\;d\omega
\ee
for any $\gamma\ge\max\{0,1-d/2\}$ and this estimate is optimal.\end{cor}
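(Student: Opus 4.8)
The plan is to read off Corollary~\ref{cor:negpot} from Theorem~\ref{Thm1} by the change of exponents $p=\gamma+\tfrac d2$ and $q=\tfrac{2p}{p-1}=\tfrac{2(2\gamma+d)}{2\gamma+d-2}$. Under this substitution the hypothesis $p\in\big(\max\{1,d/2\},+\infty\big)$ of Theorem~\ref{Thm1} reads $\gamma>\max\{0,1-d/2\}$, the value $p=1$ is the borderline case $d=1$, $\gamma=\tfrac12$, and $p=d/2$ with $d\ge3$ is $\gamma=0$. The only computation to record is the link between the probability measure $d\sigma$ underlying $\nrm\cdot{\gamma+\frac d2}$ in Theorem~\ref{Thm1} and the surface measure $d\omega$ of the corollary: since $1-\tfrac2q=\tfrac1p$ one has $\kappa_{q,d}^{\,p}=|\S^d|^{\,p(1-2/q)}=|\S^d|$, so that for every nonnegative $V$ with $\mu=\nrm V{\gamma+\frac d2}$,
\[
(\kappa_{q,d}\,\mu)^p=|\S^d|\,\mu^p=|\S^d|\int_{\S^d}V^{\gamma+\frac d2}\,d\sigma=\int_{\S^d}V^{\gamma+\frac d2}\,d\omega\,.
\]

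For the asymptotic inequality \eqref{Lgammap1Largemu} I would combine the bound $|\lambda_1(-\Delta-V)|\le\alpha(\mu)$ from \eqref{T11} with the asymptotic formula of Theorem~\ref{Thm1}, namely $\alpha(\mu)^{p-\frac d2}=\L_{p-\frac d2,d}^1\,(\kappa_{q,d}\,\mu)^p\,(1+o(1))$: taking the power $\gamma=p-\tfrac d2$ and inserting the identity above gives $|\lambda_1(-\Delta-V)|^\gamma\le\L_{\gamma,d}^1\int_{\S^d}V^{\gamma+\frac d2}\,d\omega\,(1+o(1))$ as $\mu\to+\infty$, which is \eqref{Lgammap1Largemu} with $\lesssim$ absorbing the factor $1+o(1)$. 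In the borderline case $d=1$, $\gamma=\tfrac12$ (so $p=1$, outside the main hypothesis of Theorem~\ref{Thm1}) I would instead invoke its last sentence, $\alpha(\mu)=\pi^2\,\mu^2\,(1+o(1))$: then $|\lambda_1(-\Delta-V)|^{1/2}\le\pi\,\mu\,(1+o(1))$, and since $\L_{1/2,1}^1=\tfrac12$, $|\S^1|=2\pi$ and $\int_{\S^1}V\,d\omega=2\pi\,\mu$, the right-hand side equals $\L_{1/2,1}^1\int_{\S^1}V\,d\omega\,(1+o(1))$, so \eqref{Lgammap1Largemu} holds here too.

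For the small-mass inequality \eqref{Lgammap1Smallmu}, note that $\tfrac14\,d\,(2\gamma+d-2)=\tfrac d2\,(p-1)$. If $\mu=\nrm V{\gamma+\frac d2}\le\tfrac14\,d\,(2\gamma+d-2)$, Theorem~\ref{Thm1} gives $\alpha(\mu)=\mu$ — by its main statement when $p>\max\{1,d/2\}$, by its clause asserting $\alpha(\mu)=\mu$ on $[0,\alpha_*]$ when $p=d/2$ and $d\ge3$, while for $p=1$ the threshold degenerates to $0$ and the claim is vacuous — hence $|\lambda_1(-\Delta-V)|^p\le\mu^p=\int_{\S^d}V^{\gamma+\frac d2}\,d\sigma=|\S^d|^{-1}\int_{\S^d}V^{\gamma+\frac d2}\,d\omega\le\int_{\S^d}V^{\gamma+\frac d2}\,d\omega$, which is \eqref{Lgammap1Smallmu} since $p=\gamma+\tfrac d2$. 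Optimality is inherited from Theorem~\ref{Thm1}: the threshold $\tfrac d2\,(p-1)$ cannot be enlarged because $\alpha(\mu)>\mu$ for $\mu>\tfrac d2\,(p-1)$, and equality $|\lambda_1(-\Delta-V)|=\mu$ is attained by constant potentials throughout $0\le\mu\le\tfrac d2\,(p-1)$.

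I do not expect a genuine obstacle here: the corollary is a dictionary translation of Theorem~\ref{Thm1}. The single point that demands care is the constant $\kappa_{q,d}^{\,p}=|\S^d|$, which is precisely what turns the probability-measure quasi-norm $\nrm\cdot{\gamma+\frac d2}$ of Theorem~\ref{Thm1} into the Lebesgue integral $\int_{\S^d}(\cdot)\,d\omega$ that makes the comparison with the Euclidean Keller--Lieb--Thirring constant $\L_{\gamma,d}^1$ transparent; the remaining bookkeeping is to route the borderline exponents ($d=p=1$ and $p=d/2$ with $d\ge3$) through the corresponding dedicated clauses of Theorem~\ref{Thm1} rather than its main body.
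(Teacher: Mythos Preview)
Your approach is precisely what the paper intends: it introduces Corollary~\ref{cor:negpot} as a direct restatement of Theorem~\ref{Thm1} in the variables $\gamma=p-\tfrac d2$ and with the measure $d\omega=|\S^d|\,d\sigma$, leaving the verification to the reader and adding only that the endpoint $\gamma=\max\{0,1-d/2\}$ in \eqref{Lgammap1Smallmu} is covered by approximation. Your identity $(\kappa_{q,d}\,\mu)^p=|\S^d|\,\mu^p=\int_{\S^d}V^{\gamma+d/2}\,d\omega$ is exactly the bookkeeping step that makes both regimes transparent, and your treatment of the large-$\mu$ asymptotics, including the case $d=p=1$, is correct.

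One step deserves a comment. In the small-$\mu$ regime you correctly derive the sharp bound $|\lambda_1|^{\gamma+d/2}\le\mu^{\gamma+d/2}=|\S^d|^{-1}\int_{\S^d}V^{\gamma+d/2}\,d\omega$, attained with equality by constant $V$, and then pass to \eqref{Lgammap1Smallmu} by discarding the factor $|\S^d|^{-1}$. That last move tacitly assumes $|\S^d|\ge1$, which fails for $d\ge18$; in such dimensions constant potentials actually violate \eqref{Lgammap1Smallmu} as printed. So the issue lies in the paper's formulation rather than in your strategy: the genuinely optimal small-$\mu$ bound---the one attained by constants, in line with the paper's own remark after the corollary---is the one you obtain with $d\sigma$ in place of $d\omega$ (equivalently, with the extra factor $|\S^d|^{-1}$).
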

\noindent Here the notation $f\lesssim g$ as $\mu\to+\infty$ means that $f\le c(\mu)\,g$ with $\lim_{\mu\to\infty}c(\mu)=1$. The limit case $\gamma=\max\{0,1-d/2\}$ in \eqref{Lgammap1Smallmu} is covered by approximations. We may also notice that optimality in \eqref{Lgammap1Smallmu} is achieved by constant potentials. Let us give some details.

If we consider a sequence of constant functions $\seq Vn$ uniformly converging towards $0$, for instance $V_n=1/n$, then we get that
\[
\lim_{n\to\infty}\frac{|\lambda_1(-\Delta-V_n)|^\gamma}{\int_{\S^d}V_n^{\gamma+\frac d2}\;d\omega}=+\infty
\]
which clearly forbids the possibility of an inequality of the same type as~\eqref{Lgammap1Largemu} for small values of $\int_{\S^d}V^{\gamma+\frac d2}\;d\omega$. This is however compatible with the results of A.~Ilyin in dimension $d=2$. In \cite[Theorem 2.1]{MR2879309}, the author states that if $P$ is the orthogonal projection defined by $P\,u:=u-\int_{\S^2} u\;d\omega$, then the negative eigenvalues $\lambda_k (P\,(-\Delta -V)\,P)$ satisfy the semi-classical inequality
\[
\sum_k|\lambda_k(P\,(-\Delta -V)\,P)|\le\frac38\int_{\S^2}V^2\;d\omega\,.
\]
Another way of seeing that inequalities like~\eqref{Lgammap1Largemu} are incompatible with small potentials is based on the following observation. Inequality \eqref{Lgammap1Smallmu} shows that
\[
|\lambda_1(-\Delta -V)|\le\Big(\int_{\S^2}V^2\,d\omega\Big)^{1/2}
\]
if the $\L^2$-norm of $V$ is smaller than $1$. Since such an inequality is sharp, the semi-classical Lieb-Thirring inequalities for the Schr\"odinger operator on the sphere~$\S^2$ are therefore impossible for small potentials and can be achieved only in a semi-classical asymptotic regime, that is, when the norm $\|V\|_{\L^2(\S^2)}$ is large.

\medskip Our second main result is concerned with the estimates from below for the first eigenvalue of Schr\"odinger operators with positive potentials. In this case, by analogy with \eqref{Lgamma,d}, it is convenient to introduce the constant $\L_{-\gamma,d}^1$ with $\gamma>d/2$ which is the optimal constant in the inequality:
\be{L-gamma,d}
\lambda_1(-\Delta+\phi)^{-\gamma}\le\L_{-\gamma,d}^1\iRd{\phi^{\frac d2-\gamma}}\,,
\ee
where $\phi$ is any positive potential on $\R^d$ and $\lambda_1(-\Delta+\phi)$ denotes the lowest positive eigenvalue if it exists, or $+\infty$ otherwise. Inequality~\eqref{L-gamma,d} is less standard than \eqref{Lgamma,d}: we refer to \cite[Theorem 12]{MR2253013} for a statement and a proof. As in Theorem~\ref{Thm1}, we shall also introduce exponents $p$ and $q$ such that
\[
q=2\,\frac{2\,\gamma-d}{2\,\gamma-d+2}\quad\mbox{and}\quad p=\frac q{2-q}=\gamma-\frac d2\,,
\]
so that $p$ (resp.~$q=\frac{2\,p}{p+1}$) takes arbitrary values in $(0,+\infty)$ (resp.~$(0,2)$). With these notations, we have the counterpart of Theorem~\ref{Thm1} in the case of positive potentials.
\begin{thm}\label{Thm2} Let $d\ge1$, $p\in(0,+\infty)$. There exists a concave increasing function $\nu:\R^+\to\R^+$ with $\nu(\beta)=\beta$ for any $\beta\in\big[0,\tfrac d2\,(p+1)\big]$ if $p>1$, $\nu(\beta)\le\beta$ for any $\beta>0$ and $\nu(\beta)<\beta$ for any $\beta\in\big(\tfrac d2\,(p+1),+\infty\big)$, such that
\be{T31}
\lambda_1(-\Delta+W)\ge\nu\big(\beta\big)\quad\mbox{with}\quad\beta=\nrm{W^{-1}}p^{-1}\,,
\ee
for any positive potential $W$ such that $W^{-1}\in\L^p(\S^d)$. Moreover, for large values of $\beta$, we have
\[\label{T32}
\nu(\beta)^{-\,(p+\frac d2)}\lesssim\L_{-(p+\frac d2),d}^1\,\(\kappa_{q,d}\,\beta\)^{-p}\,.
\]
The estimate \eqref{T31} is optimal in the sense that there exists a nonnegative potential $W$ such that $\beta^{-1}=\nrm{W^{-1}}p$ and $\lambda_1(-\Delta+W)=\nu(\beta)$ for any positive $\beta$ and $p$. If $\beta\le\frac d2\,(p+1)$ and $p>1$, equality in~\eqref{T31} is achieved by constant potentials.
\end{thm}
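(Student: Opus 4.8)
The plan is to run the argument behind the proof of Theorem~\ref{Thm1} in the ``dual'' range $q\in(0,2)$, after first eliminating the potential $W$ by a H\"older duality step. Fix $u\in\H^1(\S^d)$ with $\nrm u2=1$ and write $|u|^q=(W\,|u|^2)^{q/2}(W^{-1})^{q/2}$; since $q=\tfrac{2p}{p+1}$, so that $\tfrac q{2-q}=p$, H\"older's inequality with exponents $\tfrac2q$ and $\tfrac2{2-q}$ gives
\[
\iS{|u|^q}\le\(\iS{W\,|u|^2}\)^{q/2}\(\iS{W^{-p}}\)^{\frac{2-q}2}\,,
\]
hence, raising to the power $2/q$ and using $\beta=\nrm{W^{-1}}p^{-1}$, one gets $\iS{W\,|u|^2}\ge\beta\,\nrm uq^2$, with equality exactly when $W$ is proportional to $|u|^{-2/(p+1)}$. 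Taking the infimum of the Rayleigh quotient of $-\Delta+W$ first over admissible $W$ and then over $u$, we see that \eqref{T31} holds with
\[
\nu(\beta):=\inf\Big\{\,\nrm{\nabla u}2^2+\beta\,\nrm uq^2\ :\ u\in\H^1(\S^d),\ \nrm u2=1\,\Big\}\,,
\]
and that this choice of $\nu$ is optimal: if $u_\beta$ attains the infimum, the potential $W_\beta$ proportional to $u_\beta^{-2/(p+1)}$, normalised so that $\nrm{W_\beta^{-1}}p=\beta^{-1}$, turns both inequalities above into equalities, so that $\lambda_1(-\Delta+W_\beta)=\nu(\beta)$.

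The qualitative properties of $\nu$ then follow from this variational formula. Being the infimum of the affine functions $\beta\mapsto\nrm{\nabla u}2^2+\beta\,\nrm uq^2$, which have nonnegative slopes, $\nu$ is concave and nondecreasing; it is in fact increasing, for otherwise, on a half-line where $\nu$ were constant, one could find functions $u$ bounded in $\H^1(\S^d)$ with $\nrm u2=1$ and $\nrm uq\to0$, which is impossible since by Rellich's theorem such $u$ would converge, along a subsequence, in $\L^2(\S^d)$ and, as $q<2$, in $\L^q(\S^d)$, to a function of $\L^2$-norm $1$ and $\L^q$-norm $0$. Testing with $u\equiv1$ yields $\nu(\beta)\le\beta$ for all $\beta>0$. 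For $\beta$ small, the sharp interpolation inequality on the sphere $\nrm u2^2-\nrm uq^2\le\tfrac{2-q}d\,\nrm{\nabla u}2^2$, valid for $1\le q<2$ with equality only for constants, gives $\nrm{\nabla u}2^2+\beta\,\nrm uq^2\ge\beta\,\nrm u2^2$ whenever $\beta\le\tfrac d{2-q}=\tfrac d2(p+1)$, so $\nu(\beta)=\beta$ there, equality in \eqref{T31} being achieved by $W\equiv\beta$; this is the only place where the hypothesis $p>1$ (i.e.\ $q>1$) enters. Conversely, for $\beta>\tfrac d2(p+1)$, choosing $u=1+\eps\,Y$ with $-\Delta Y=d\,Y$ and $\eps$ small, the expansions $\nrm u2^2-\nrm uq^2=(2-q)\,\eps^2\nrm Y2^2+O(\eps^3)$ and $\nrm{\nabla u}2^2=d\,\eps^2\nrm Y2^2$ force $\nrm{\nabla u}2^2<\beta\,(\nrm u2^2-\nrm uq^2)$, whence $\nu(\beta)<\beta$.

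There remains the semi-classical regime $\beta\to+\infty$. Because $q<2$, the minimizers $u_\beta$ lower their $\L^q$-norm by concentrating; balancing $\nrm{\nabla u}2^2$ against $\beta\,\nrm uq^2$ makes them concentrate on a geodesic cap of radius $\ell(\beta)\to0$ and gives the order $\nu(\beta)\sim\beta^{p/(p+\frac d2)}=o(\beta)$. On such a cap the metric of $\S^d$ is Euclidean up to an $O(\ell^2)$ distortion, so the problem defining $\nu(\beta)$ degenerates, after rescaling and taking the normalisation of $d\sigma$ into account (which replaces $\beta$ by the effective coupling $\kappa_{q,d}\,\beta$, using $\kappa_{q,d}=|\S^d|^{1-2/q}$ and $1-\tfrac2q=-\tfrac1p$), into its flat analogue $\inf\{\nrmRd{\nabla v}2^2+\kappa_{q,d}\,\beta\,\nrmRd vq^2:\nrmRd v2=1\}$ on $\R^d$. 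By \eqref{L-gamma,d} and a scaling optimisation this Euclidean infimum equals $\big(\L_{-(p+\frac d2),d}^1\big)^{-1/(p+\frac d2)}(\kappa_{q,d}\,\beta)^{p/(p+\frac d2)}$, which rewrites exactly as $\nu(\beta)^{-(p+\frac d2)}\lesssim\L_{-(p+\frac d2),d}^1\,(\kappa_{q,d}\,\beta)^{-p}$.

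The hard part is this last step. The companion upper bound $\nu(\beta)\le\big(\L_{-(p+\frac d2),d}^1\big)^{-1/(p+\frac d2)}(\kappa_{q,d}\,\beta)^{p/(p+\frac d2)}(1+o(1))$ is obtained routinely, by transplanting onto a small geodesic cap a cut-off, rescaled near-optimizer of the above Euclidean problem; it already yields the order of $\nu(\beta)$ and hence the concentration of $u_\beta$. The asserted inequality, which is a \emph{lower} bound on $\nu(\beta)$ uniform over all test functions, then requires a blow-up analysis around the concentration set of $u_\beta$: one must control the concentration structure (a splitting into several bumps, if it occurred, would only be penalised, but this has to be quantified), show that the rescaled functional converges to the Euclidean one, and keep the curvature corrections of size $O(\ell^2)$ under control; this localization is the principal technical obstacle. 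Finally, the existence of $u_\beta$ for every $\beta$ follows from the direct method, using the compact embedding $\H^1(\S^d)\hookrightarrow\L^q(\S^d)$; its positivity and regularity --- needed for the optimality statement and somewhat delicate when $q<1$, where $\nrm\cdot q$ is only a quasi-norm --- follow, exactly as in the proof of Theorem~\ref{Thm1}, from spherical symmetric decreasing rearrangement, which turns the problem into a one-dimensional variational problem whose Euler--Lagrange equation is an ODE.
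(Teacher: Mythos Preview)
Your proof is correct and follows essentially the same approach as the paper: the H\"older duality step reducing to the interpolation inequality~\eqref{InterpSphere2}, the variational definition of $\nu(\beta)$, the rigidity input for $q\in[1,2)$ giving $\nu(\beta)=\beta$ on $[0,\tfrac d{2-q}]$, the test-function perturbation $1+\eps\,Y$ for $\beta>\tfrac d{2-q}$, and the concentration/blow-up analysis for the semi-classical asymptotics are all exactly what the paper does (the latter is handled in the paper by referring to the analogous Propositions~\ref{Cor:CriticalSubCritical}--\ref{prop-asympt}). If anything, you give more detail than the paper on the asymptotic step and on the positivity/regularity issues for $q<1$, which the paper leaves implicit.
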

Again the expression of $\L_{-\gamma,d}^1$ is not explicit when $d\ge2$ but can be given in terms of an optimal constant in some Gagliardo-Nirenberg-Sobolev inequality (see~\cite{MR2253013}, and \eqref{GNS2}--\eqref{Eq:KLT-GNS2} below in Section~\ref{Sec:Confining}).

We can rewrite Theorem~\ref{Thm2} in terms of $\gamma=p+d/2$ and explicit integrals involving $W$.
\begin{cor}\label{cor:posit} Let $d\ge1$ and $\gamma>d/2$. For $\beta=\nrm{W^{-1}}{\gamma-\frac d2}^{-1}$ large, we have
\[\label{Lminusgammap1Largemu}
\big(\lambda_1(-\Delta+W)\big)^{-\gamma}\lesssim\L_{-\gamma,d}^1\,\int_{\S^d}W^{\frac d2-\gamma}\;d\omega\,.
\]
However, if $\gamma\ge\frac d2+1$ and if $\beta=\nrm{W^{-1}}{\gamma-\frac d2}^{-1}\le\frac14\,d\,(2\,\gamma-d+2)$, then we have
\[
\big(\lambda_1(-\Delta+W)\big)^{\frac d2-\gamma}\le\int_{\S^d}W^{\frac d2-\gamma}\;d\omega\,,
\]
and this estimate is optimal.\end{cor}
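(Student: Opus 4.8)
The corollary is Theorem~\ref{Thm2} rewritten in the variable $\gamma=p+\tfrac d2$ and with the Lebesgue-induced measure $d\omega$ in place of the probability measure $d\sigma$, so the plan is simply to unwind the definitions. First I would record the elementary identities relating the two sets of parameters: from $p=\gamma-\tfrac d2$ one has $p+\tfrac d2=\gamma$ and $\tfrac d2-\gamma=-p$; from $q=\tfrac{2p}{p+1}$ one gets $1-\tfrac2q=-\tfrac1p$, whence $\kappa_{q,d}=|\S^d|^{-1/p}$ and $\kappa_{q,d}^{-p}=|\S^d|$; and from $\beta^{-1}=\nrm{W^{-1}}p$ together with $d\sigma=d\omega/|\S^d|$,
\[
\beta^{-p}=\nrm{W^{-1}}p^p=\iS{W^{\frac d2-\gamma}}=\frac1{|\S^d|}\int_{\S^d}W^{\frac d2-\gamma}\,d\omega\,,
\]
so that $(\kappa_{q,d}\,\beta)^{-p}=\int_{\S^d}W^{\frac d2-\gamma}\,d\omega$.

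For the semi-classical assertion (large $\beta$) I would start from the asymptotic bound in Theorem~\ref{Thm2}, namely $\nu(\beta)^{-(p+\frac d2)}\lesssim\L_{-(p+\frac d2),d}^1\,(\kappa_{q,d}\,\beta)^{-p}$; by the identities above its left-hand side equals $\nu(\beta)^{-\gamma}$ and its right-hand side equals $\L_{-\gamma,d}^1\int_{\S^d}W^{\frac d2-\gamma}\,d\omega$. Since $\lambda_1(-\Delta+W)\ge\nu(\beta)>0$ by Theorem~\ref{Thm2} and $t\mapsto t^{-\gamma}$ is decreasing on $(0,+\infty)$, we have $\lambda_1(-\Delta+W)^{-\gamma}\le\nu(\beta)^{-\gamma}$, hence $\lambda_1(-\Delta+W)^{-\gamma}\lesssim\L_{-\gamma,d}^1\int_{\S^d}W^{\frac d2-\gamma}\,d\omega$, which is the first displayed inequality, valid for every $\gamma>\tfrac d2$ (the full range $p\in(0,+\infty)$ of Theorem~\ref{Thm2}).

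For the small-$\beta$ assertion I would note that $\gamma\ge\tfrac d2+1$ is equivalent to $p\ge1$ and that $\tfrac14\,d\,(2\gamma-d+2)=\tfrac d2\,(p+1)$. When $p>1$, Theorem~\ref{Thm2} gives $\nu(\beta)=\beta$ on $\big[0,\tfrac d2(p+1)\big]$, so the hypothesis $\beta\le\tfrac14\,d\,(2\gamma-d+2)$ yields $\lambda_1(-\Delta+W)\ge\beta$; raising to the negative power $\tfrac d2-\gamma=-p$ and using $\beta^{-p}=\iS{W^{\frac d2-\gamma}}\le\int_{\S^d}W^{\frac d2-\gamma}\,d\omega$ (because $|\S^d|\ge1$) gives the claimed inequality. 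The borderline exponent $p=1$, i.e.\ $\gamma=\tfrac d2+1$, is covered by approximation, as for the analogous limit in Corollary~\ref{cor:negpot}. Optimality is inherited from Theorem~\ref{Thm2}: the potentials that saturate \eqref{T31} there, namely constant potentials in the range $\beta\le\tfrac d2(p+1)$, saturate the transcribed inequalities here as well.

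Since Theorem~\ref{Thm2} is already in hand, there is no genuine analytic difficulty; the only point requiring care is the bookkeeping of the reciprocal exponents $p$, $q$, $\gamma$, $-\gamma$, $\tfrac d2-\gamma$ and of the powers of $|\S^d|$ — in particular the fact that passing from $d\sigma$ to $d\omega$ merely weakens the small-$\beta$ inequality while being exactly absorbed by the factor $\kappa_{q,d}^{-p}=|\S^d|$ in the semi-classical regime.
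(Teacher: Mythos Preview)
Your approach is precisely the paper's: Corollary~\ref{cor:posit} is presented there without proof, as a direct transcription of Theorem~\ref{Thm2} into the variable $\gamma=p+\tfrac d2$ and the measure $d\omega$, and your bookkeeping---in particular the key identity $(\kappa_{q,d}\,\beta)^{-p}=\int_{\S^d}W^{\frac d2-\gamma}\,d\omega$ for the semi-classical regime---is correct.

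There is, however, one step that fails as written. In the small-$\beta$ part you pass from $\beta^{-p}=\iS{W^{\frac d2-\gamma}}$ to $\int_{\S^d}W^{\frac d2-\gamma}\,d\omega$ by invoking $|\S^d|\ge1$. This is not true in general: $|\S^d|=2\,\pi^{(d+1)/2}/\Gamma\!\big(\tfrac{d+1}2\big)\to0$ as $d\to\infty$, and already $|\S^{18}|<1$. In such dimensions the displayed inequality with $d\omega$ is actually \emph{violated} by constant potentials, so the claim ``optimal, with equality for constants'' cannot survive the change of measure either. The sharp statement is the one with the probability measure $d\sigma$ (where equality for constants is immediate from $\nu(\beta)=\beta$); the $d\omega$-version is only a genuine weakening when $|\S^d|\ge1$. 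This is more an imprecision in the corollary as stated than a defect of your strategy---the same issue arises verbatim for Corollary~\ref{cor:negpot}---but you should drop the blanket assertion $|\S^d|\ge1$ and either keep $d\sigma$ or flag the dimensional restriction.
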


\medskip This paper is organized as follows. Section \ref{Sec:Interpolationq>2} contains various results on interpolation inequalities; the most important one for our purpose is stated in Lemma~\ref{Lem:q>2}. Theorem \ref{Thm1}, Corollary~\ref{cor:negpot} and various spectral estimates for Schr\"odinger operators with negative potentials are established in Section \ref{Sec:Spectral}. Section \ref{Sec:Confining} deals with the case of positive potentials and contains the proofs of Theorem \ref{Thm2} and Corollary \ref{cor:posit}. Section \ref{Sec:Conclusion} is devoted to the threshold case ($q=2$, that is, $p$, $\gamma\to+\infty$) of exponential estimates for eigenvalues or, in terms of interpolation inequalities, to logarithmic Sobolev inequalities. Finally numerical and technical results have been collected in two appendices.

\section{Interpolation inequalities and consequences for negative potentials}\label{Sec:Interpolationq>2}

\subsection{Inequalities in the Euclidean space}\label{Sec:GNS-Euclidean}

Let us start by some considerations on inequalities in the Euclidean space, which play a crucial role in the semi-classical regime.

We recall that we denote by $2^*$ the Sobolev critical exponent $\frac{2d}{d-2}$ if $d\ge 3$ and consider Sobolev's inequality on $\R^d$, $d\ge3$,
\be{Ineq:Sobolev}
\nrmRd v{2^*}^2\le\mathsf S_d\,\nrmRd{\nabla v}2^2\quad\forall\,v\in\mathcal D^{1,2}(\R^d)
\ee
where $\mathsf S_d$ is the optimal constant and $\mathcal D^{1,2}(\R^d)$ is the Beppo-Levi space obtained by completion of smooth compacty supported functions with respect to the norm $v\mapsto \nrmRd{\nabla v}2$. See Appendix \ref{Sec:Sobolev} for details and comments on the expression of $\mathsf S_d$.

Assume now that $d\ge1$ and recall that $2^*=+\infty$ if $d=1$ or $2$. In the subcritical case, that is, $q\in(2,2^*)$, let
\[
\mathsf K_{q,d}:=\inf_{v\in\H^1(\R^d)\setminus\{0\}}\frac{\nrmRd{\nabla v}2^2+\nrmRd v2^2}{\nrmRd vq^2}
\]
be the optimal constant in the Gagliardo-Nirenberg-Sobolev inequality
\be{Ineq:GNS}
\mathsf K_{q,d}\,\nrmRd vq^2\le\nrmRd{\nabla v}2^2+\nrmRd v2^2\quad\forall\,v\in\H^1(\R^d)\,.
\ee
The optimal constant $\L_{\gamma,d}^1$ in the one bound state Keller-Lieb-Thirring inequality is such that
\be{Eq:KLT-GNS}
\L_{\gamma,d}^1:=\(\mathsf K_{q,d}\)^{-\,p}\quad\mbox{with}\quad p=\gamma+\frac d2\,,\quad q=2\,\frac{2\,\gamma+d}{2\,\gamma+d-2}\,.
\ee
See Appendix~\ref{Sec:GNS-KLT} for a proof and references, and \cite{Lieb-Thirring76} for a detailed discussion. Also see \cite[Appendix A. Numerical studies, by~J.F. Barnes]{Lieb-Thirring76} for numerical values of $\mathsf K_{q,d}$.

We shall also define the exponent
\[
\vartheta:=d\,\frac{q-2}{2\,q}
\]
which plays an important role in the scale invariant form of the Gagliardo-Nirenberg-Sobolev interpolation inequalities associated to $\mathsf K_{q,d}$: see Appendix~\ref{Sec:GNS} for details.

\subsection{Interpolation inequalities on the sphere}\label{Sec:InterpSphere}

Using the inverse stereographic projection (see Appendix~\ref{Sec:Stereographic}), it is possible to relate interpolation inequalities on $\R^d$ with interpolation inequalities on $\S^d$. In this section we consider the case of the sphere. Notice that $\alpha_*=d/(q-2)$ when $q=2^*=2\,d/(d-2)$, $d\ge3$.
\begin{lem}\label{Lem:q>2} Let $q\in(2,2^*)$. Then there exists a concave increasing function $\mu:\R^+\to\R^+$ with the following properties
\[
\mu(\alpha)=\alpha\quad\forall\,\alpha\in\big[0,\tfrac d{q-2}\big]\quad\mbox{and}\quad\mu(\alpha)<\alpha\quad\forall\,\alpha\in\big(\tfrac d{q-2},+\infty\big)\,,
\]
\[
\mu(\alpha)=\frac{\mathsf K_{q,d}}{\kappa_{q,d}}\,\alpha^{1-\vartheta}\,(1+o(1))\quad\mbox{as}\quad\alpha\to+\infty\,,
\]
such that
\be{InterpSphere}
\nrm{\nabla u}2^2+\alpha\,\nrm u2^2\ge\mu(\alpha)\,\nrm uq^2\quad\forall\,u\in\H^1(\S^d)\,.
\ee

\noindent If $d\ge3$ and $q=2^*$, the inequality also holds for any $\alpha>0$ with $\mu(\alpha)=\min\left\{\alpha,\alpha_*\right\}$.\end{lem}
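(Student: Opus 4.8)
The plan is to extract the function $\mu$ as an infimum, then verify its stated properties one at a time. First I would define, for $\alpha>0$,
\[
\mu(\alpha):=\inf_{u\in\H^1(\S^d)\setminus\{0\}}\frac{\nrm{\nabla u}2^2+\alpha\,\nrm u2^2}{\nrm uq^2}\,,
\]
so that \eqref{InterpSphere} holds by construction and $\mu$ is the largest function for which it holds. Monotonicity in $\alpha$ is immediate since the numerator is increasing in $\alpha$ for fixed $u$. Concavity follows because $\mu$ is an infimum of the affine-in-$\alpha$ functions $\alpha\mapsto\nrm{\nabla u}2^2+\alpha\,\nrm u2^2$ divided by the positive constant $\nrm uq^2$, i.e. $\mu$ is an infimum of affine functions of $\alpha$, hence concave; it is increasing and (being bounded above by the value at $u\equiv1$, namely $\alpha$, and bounded below by $0$) takes values in $\R^+$. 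Testing with $u\equiv1$ gives $\mu(\alpha)\le\alpha$ for all $\alpha$.

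For the flat regime $\mu(\alpha)=\alpha$ on $[0,\tfrac d{q-2}]$, the key input is the optimal interpolation inequality on $\S^d$ due to Bidaut-Véron--Véron and Beckner (in the normalized form used throughout the paper), namely that for $q\in(2,2^*]$,
\[
\nrm{\nabla u}2^2\ge\frac d{q-2}\,\big(\nrm uq^2-\nrm u2^2\big)\,,
\]
with equality for constants; adding $\tfrac d{q-2}\nrm u2^2$ to both sides and then replacing $\tfrac d{q-2}$ by a smaller $\alpha$ in the $\nrm u2^2$ coefficient shows $\nrm{\nabla u}2^2+\alpha\nrm u2^2\ge\alpha\,\nrm uq^2$ whenever $0\le\alpha\le\tfrac d{q-2}$, so $\mu(\alpha)\ge\alpha$, and with $u\equiv1$ we get equality. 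For $\alpha>\tfrac d{q-2}$ the strict inequality $\mu(\alpha)<\alpha$ should follow by a local variational argument: perturbing the constant by a first spherical harmonic $u=1+\eps Y_1$ and expanding to second order, the ratio is $\alpha-\eps^2\,c\,(\alpha-\tfrac d{q-2})+O(\eps^3)$ with $c>0$, so for $\alpha>\tfrac d{q-2}$ the constant is no longer a minimizer and $\mu(\alpha)<\alpha$ strictly.

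For the asymptotics as $\alpha\to+\infty$, I would pass to the Euclidean picture via the inverse stereographic projection described in Appendix~\ref{Sec:Stereographic}: concentrating test functions near a point of $\S^d$ at scale $\alpha^{-1/2}$, the sphere looks flat, the measure $d\sigma$ contributes the factor $|\S^d|^{-1}$ (hence $\kappa_{q,d}$ after accounting for the $q$-norm normalization), and the leading-order problem becomes the scale-invariant Gagliardo--Nirenberg quotient on $\R^d$, whose optimal constant is $\mathsf K_{q,d}$ with scaling exponent $\vartheta=d\tfrac{q-2}{2q}$; this gives the upper bound $\mu(\alpha)\le\tfrac{\mathsf K_{q,d}}{\kappa_{q,d}}\alpha^{1-\vartheta}(1+o(1))$. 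The matching lower bound requires a concentration-compactness / covering argument showing that near-optimal sequences cannot do better than the flat problem (any mass that stays spread out contributes at order $\alpha$, which is subdominant since $1-\vartheta<1$); this compactness step for the lower asymptotic bound is the main obstacle. Finally, in the critical case $d\ge3$, $q=2^*$, the interpolation inequality degenerates and one uses instead the conformal invariance of $\S^d$: Sobolev's inequality \eqref{Ineq:Sobolev} transported to the sphere gives exactly $\nrm{\nabla u}2^2+\alpha_*\nrm u2^2\ge\alpha_*\nrm u{2^*}^2$ with $\alpha_*=\tfrac14 d(d-2)=\tfrac d{q-2}$, so $\mu(\alpha)=\alpha$ for $\alpha\le\alpha_*$, while for $\alpha>\alpha_*$ concentration at a point (exploiting the non-compact conformal group, with no length scale in the critical quotient) forces $\mu(\alpha)=\alpha_*$ exactly — the extra $(\alpha-\alpha_*)\nrm u2^2$ term can be made arbitrarily small by concentration while the Sobolev quotient stays at $\mathsf S_d^{-1}\kappa_{2^*,d}^{-1}=\alpha_*$, giving $\mu(\alpha)=\min\{\alpha,\alpha_*\}$.
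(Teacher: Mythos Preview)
Your plan is correct and follows essentially the same route as the paper: define $\mu(\alpha)$ as the infimum of the quotient, get concavity and monotonicity for free, use the Bidaut-V\'eron--V\'eron interpolation inequality for the flat regime and the perturbation $1+\eps\varphi$ for the strict inequality beyond $d/(q-2)$, then handle the asymptotics by concentrating GNS extremals through the stereographic projection for the upper bound and concentration-compactness for the lower bound, with the critical case treated via Aubin--Talenti concentration. The only cosmetic difference is that the paper phrases the flat-regime step through the rigidity of the Euler--Lagrange equation (constant solutions for $\alpha\le d/(q-2)$) rather than directly through the inequality $\nrm{\nabla u}2^2\ge\tfrac d{q-2}(\nrm uq^2-\nrm u2^2)$ as you do; the two are equivalent.
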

The remainder of this section is mostly devoted to the proof of Lemma~\ref{Lem:q>2}. A fundamental tool is a \emph{rigidity} result proved by M.-F.~Bidaut-V\'eron and L.~V\'eron in~\cite[Theorem 6.1]{BV-V} for $q>2$, which goes as follows. Any positive solution of
\be{Eqn:BV-V}
-\,\Delta f+\alpha\,f=f^{q-1}
\ee
has a unique solution $f\equiv\alpha^{1/(q-2)}$ for any $0<\alpha\le d/(q-2)$. A straightforward consequence of this rigidity result is the following interpolation inequality (see ~\cite[Corollary 6.2]{BV-V}):
\be{Ineq:Interpolation}
\iS{|\nabla u|^2}\ge\frac d{q-2}\left[\(\iS{|u|^q}\)^{2/q}-\iS{|u|^2}\right]\quad\forall\,u\in\H^1(\S^d,d\sigma)\,.
\ee
Inequality~\eqref{Ineq:Interpolation} holds for any $q\in[1,2)\cup(2,2^*]$ if $d\ge 3$ and for any $q\in[1,2)\cup(2,\infty)$ if $d=1$ or $2$. An alternative proof of~\eqref{Ineq:Interpolation} has been established in \cite{MR1230930} for $q>2$ using previous results by E.~Lieb in~\cite{MR717827} and the Funk-Hecke formula (see \cite{45.0702.01,46.0632.02}). The whole range $p\in[1,2)\cup(2, 2^*)$ was covered in the case of the ultraspherical operator in \cite{MR2564058,MR2641798}. Also see \cite{MR1412446,MR1813804} for the \emph{carr\'e du champ} method, and \cite{DEKL} for an elementary proof. Inequality~\eqref{Ineq:Interpolation} is \emph{tight} as defined by D.~Bakry in \cite[Section~2]{MR2213477}, in the sense that equality is achieved only by constants.

\begin{remark} Inequality~\eqref{Ineq:Interpolation} is equivalent to
\[
\inf_{u\in\H^1(\S^d)\setminus\{0\}}\frac{(q-2)\,\nrm{\nabla u}2^2}{\nrm uq^2-\nrm u2^2}=d\,.
\]
Although we will not make use of them in this paper, we may  notice that the following properties hold true:
\begin{enumerate}
\item[(i)] If $q<2^*$, the above infimum is not achieved in $\H^1(\S^d)\setminus\{0\}$ but
\[
\lim_{\eps\to0_+}\frac{(q-2)\,\nrm{\nabla u_\eps}2^2}{\nrm{u_\eps}q^2-\nrm{u_\eps}2^2}=d
\]
if $u_\eps:=1+\eps\,\varphi$, where $\varphi$ is a non-trivial eigenfunction of the Laplace-Beltrami operator corresponding the first nonzero eigenvalue (see below Section~\ref{Sec:PropertiesAlpha(mu)}).
\item[(ii)] If $q=2^*$, $d\ge3$, there are non-trivial optimal functions for ~\eqref{Ineq:Interpolation}, due to the conformal invariance. Alternatively, these solutions can be constructed from the family of Aubin-Talenti optimal functions for Sobolev's inequality, using the inverse stereographic projection.
\item[(iii)] If $\alpha>\alpha_*$ and $q=2^*$, $d\ge3$, there are no optimal functions for \eqref{InterpSphere}, since otherwise $\alpha\mapsto\mu(\alpha)$ would not be constant on $(\alpha_*,\alpha)$: see Proposition~\ref{Prop:Critical} below.
\end{enumerate}
\end{remark}

\subsection{Properties of the function \texorpdfstring{$\alpha\mapsto\mu(\alpha)$ in the subcritical case}{alpha mapsto mu(alpha)}}\label{Sec:PropertiesAlpha(mu)}

Assume that $q\in(2,2^*)$. For any $\alpha>0$, consider
\[
\mathcal Q_\alpha[u]:=\frac{\nrm{\nabla u}2^2+\alpha\,\nrm u2^2}{\nrm uq^2}\quad\forall\,u\in\H^1(\S^d,d\sigma)\,.
\]
It is a standard result of the calculus of variations that
\[
\inf_{\begin{array}{c}u\in\H^1(\S^d,d\sigma)\\ \iS{|u|^q}=1\end{array}}\mathcal Q_\alpha[u]:=\mu(\alpha)
\]
is achieved by a minimizer $u\in\H^1(\S^d,d\sigma)$ which solves the Euler-Lagrange equations
\be{EL}
-\,\Delta u+\alpha\,u-\mu(\alpha)\,u^{q-1}=0\,.
\ee
Indeed we know that there is a Lagrange multiplier associated to the constraint $\iS{|u|^q}=1$, and multiplying \eqref{EL} by $u$ and integrating on $\S^d$, we can identify it with $\mu(\alpha)$. As a corollary, we have shown that~\eqref{InterpSphere} holds. The fact that the Lagrange multiplier can be identified so easily is a consequence of the fact that all terms in~\eqref{InterpSphere} are two-homogeneous.

\medskip We can now list some basic properties of the function $\alpha\mapsto\mu(\alpha)$.
\begin{enumerate}
\item For any $\alpha>0$, $\mu(\alpha)$ is positive, since the infimum is achieved by a nonnegative function $u$ and $u=0$ is incompatible with the constraint $\iS{|u|^q}=1$. By taking a constant test function, we see that $\mu(\alpha)\le \alpha$, for all $\alpha>0$. The function $\alpha\mapsto\mu(\alpha)$ is monotone nondecreasing since for a given $u\in\H^1(\S^d,d\sigma)\setminus\{0\}$, the function $\alpha\mapsto\mathcal Q_\alpha[u]$ is monotone increasing. It is actually strictly monotone. Indeed if $\mu(\alpha_1)=\mu(\alpha_2)$ with $\alpha_1<\alpha_2$, then one can notice that $\mathcal Q_{\alpha_1}[u_2]<\mu(\alpha_1)$ if~$u_2$ is a minimizer of $\mathcal Q_{\alpha_2}$ satisfying the constraint $\iS{|u_2|^q}=1$, which provides an obvious contradiction.
\item We have
\[
\mu(\alpha)=\alpha\quad\forall\,\alpha\in\(0,\tfrac d{q-2}\right]\,.
\]
Indeed, if $u$ is a solution of \eqref{EL}, then $f=\mu(\alpha)^{1/(q-2)}\,u$ solves \eqref{Eqn:BV-V} and is therefore a constant function if $\alpha\le d/(q-2)$ according to \cite[Theorem~6.1]{BV-V}, and so is $u$ as well. Because of the normalization constraint $\nrm uq=1$, we get that $u=1$, which proves the statement.

On the contrary, we have
\[
\mu(\alpha)<\alpha\quad\forall\,\alpha>\frac d{q-2}\,.
\]
Let us prove this. Let $\varphi$ be a non-trivial eigenfunction of the Laplace-Beltrami operator corresponding the first nonzero eigenvalue:
\[
-\,\Delta\varphi=d\,\varphi\,.
\]
If $x=(x_1,x_2,...x_d,z)$ are cartesian coordinates of $x\in\R^{d+1}$ so that $\S^d\subset\R^{d+1}$ is characterized by the condition $\sum_{i=1}^d x_i^2+z^2=1$, then a simple choice of such a function $\varphi$ is $\varphi(x)=z$. By orthogonality with respect to the constants, we know that $\iS\varphi=0$. We may now Taylor expand $\mathcal Q_\alpha$ around $u=1$ by considering $u=1+\eps\,\varphi$ as $\eps\to0$ and obtain that
\[
\mu(\alpha)\le \mathcal Q_\alpha[1+\eps\,\varphi]=\frac{(d+\alpha)\,\eps^2\iS{|\varphi|^2} +\alpha}{\(\iS{|1+\eps\,\varphi|^q}\)^{2/q}}=\alpha+\big[d+\alpha\,(2-q)\big]\,\eps^2\iS{|\varphi|^2}+o(\eps^2)\,.
\]
By taking $\eps$ small enough, we get $\mu(\alpha)<\alpha$ for all $\alpha>d/(q-2)$. Optimizing on the value of $\eps>0$ (not necessarily small) provides an interesting test function: see Section~\ref{Sec:RefinedUpper}.
\item The function $\alpha\mapsto\mu(\alpha)$ is concave, because it is the minimum of a family of affine functions.
\end{enumerate}

\subsection{More estimates on the function \texorpdfstring{$\alpha\mapsto\mu(\alpha)$}{alpha mapsto mu(alpha)}}\label{Sec:EstimatesAlpha(mu)}

We first consider the critical case $q=2^*$, $d\ge3$. As in the subcritical case $q<2^*$, we have $\mu(\alpha)=\alpha$ for $\alpha\le\alpha^*$. For $\alpha>\alpha^*$, the function $\alpha\mapsto\mu(\alpha)$ is constant:
\begin{prop}\label{Prop:Critical} With the notations of Lemma~\ref{Lem:q>2}, if $d\ge3$ and $q=2^*$, then
\[
\mu(\alpha)=\alpha_*\quad\forall\,\alpha>\alpha_*=\frac d{q-2}=\frac 14\,d\,(d-2)\,.
\]
\end{prop}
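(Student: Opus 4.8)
The plan is to establish the two inequalities $\mu(\alpha)\ge\alpha_*$ and $\mu(\alpha)\le\alpha_*$ for $\alpha>\alpha_*$ separately, using the fact that $\alpha\mapsto\mu(\alpha)$ is nondecreasing and concave (from Section~\ref{Sec:PropertiesAlpha(mu)}), together with the rigidity-based identity $\mu(\alpha)=\alpha$ for $\alpha\in(0,\alpha_*]$ in the critical case. First I would obtain the lower bound: since $\mu$ is nondecreasing and $\mu(\alpha_*)=\alpha_*$, we immediately have $\mu(\alpha)\ge\alpha_*$ for all $\alpha>\alpha_*$. Thus the whole content of the proposition is the reverse inequality $\mu(\alpha)\le\alpha_*$, which by monotonicity it suffices to prove for $\alpha$ slightly larger than $\alpha_*$, or equivalently, by concavity, it suffices to show that $\mu$ cannot exceed $\alpha_*$ for any single $\alpha>\alpha_*$; a concave function lying below the line $\mu=\alpha$, equal to it on $[0,\alpha_*]$, and $\ge\alpha_*$ beyond, must in fact be constant equal to $\alpha_*$ on $[\alpha_*,\infty)$ as soon as one knows $\mu(\alpha)\le\alpha_*$ at one point $\alpha>\alpha_*$ — but actually concavity plus $\mu(\alpha_*)=\alpha_*$ plus $\mu$ nondecreasing does not by itself force this, so a genuine argument using the critical exponent is needed.

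The key mechanism is the conformal (dilation) invariance available only when $q=2^*$. I would argue that if $\mu(\alpha)$ were achieved by a minimizer $u$ for some $\alpha>\alpha_*$, then $u$ solves the Euler--Lagrange equation $-\Delta u+\alpha\,u=\mu(\alpha)\,u^{2^*-1}$ on $\S^d$. Transplanting to $\R^d$ via the inverse stereographic projection (Appendix~\ref{Sec:Stereographic}), the conformal Laplacian term and the critical nonlinearity transform covariantly while the zeroth-order term $\alpha\,u$ transforms with the conformal weight; applying a one-parameter family of conformal dilations to $u$ and inserting the resulting functions into $\mathcal Q_\alpha$ produces a family whose energy, after computing the scaling of each of the three integrals $\nrm{\nabla u}2^2$, $\nrm u2^2$, $\nrm uq^2$, can be made strictly smaller than $\mathcal Q_\alpha[u]$ unless the $\nrm u2^2$ term degenerates. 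Concretely, one shows that $\min_{t>0}\mathcal Q_\alpha[u_t]<\mathcal Q_\alpha[u]$ whenever $\alpha>\alpha_*$, contradicting minimality; hence the infimum $\mu(\alpha)$ is \emph{not attained} for $\alpha>\alpha_*$. Once non-attainment is known, a concentration-compactness / blow-up analysis of a minimizing sequence forces the mass to concentrate at a point, and the limiting problem is exactly the Sobolev problem on $\R^d$ (the $\alpha\,\nrm u2^2$ term scaling away), so $\mu(\alpha)=\mathsf S_d^{-1}\,\kappa_{2^*,d}$ in appropriate normalization; this common value equals $\alpha_*=\tfrac14 d(d-2)$, matching the known relation $\alpha_*=d/(q-2)$ when $q=2^*$ and the identification of the Sobolev constant with the sharp constant in \eqref{Ineq:Interpolation}.

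An alternative, cleaner route I would actually prefer for the write-up avoids concentration-compactness: fix any $\alpha>\alpha_*$ and let $v_\eps$ be the Aubin--Talenti bubbles (optimizers of Sobolev's inequality \eqref{Ineq:Sobolev}) transplanted to $\S^d$ via inverse stereographic projection, rescaled so as to concentrate at a point as $\eps\to0$. Then $\nrm{\nabla v_\eps}2^2/\nrm{v_\eps}{2^*}^2\to\alpha_*$ (this is the sharp Sobolev constant on the sphere, and is precisely the limiting version of \eqref{Ineq:Interpolation}), while $\nrm{v_\eps}2^2/\nrm{v_\eps}{2^*}^2\to0$ because in dimension $d\ge3$ the $\L^2$ norm of a concentrating bubble is of lower order than its $\L^{2^*}$ norm (the critical case is exactly the borderline where $\nrm{v_\eps}2$ does not blow up but becomes negligible after normalization). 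Therefore $\mu(\alpha)\le\mathcal Q_\alpha[v_\eps]=\alpha_*+o(1)$, giving $\mu(\alpha)\le\alpha_*$; combined with the lower bound this yields $\mu(\alpha)=\alpha_*$. The main obstacle is the quantitative control of the three integrals of the concentrating bubble — in particular verifying that the Sobolev quotient of the transplanted bubbles really tends to $\alpha_*$ and that the $\L^2$ correction is genuinely $o(1)$ after normalization by $\nrm{v_\eps}{2^*}^2$; this requires the precise asymptotic expansions of $\int_{\S^d}v_\eps^2\,d\sigma$, $\int_{\S^d}|\nabla v_\eps|^2\,d\sigma$ and $\int_{\S^d}v_\eps^{2^*}\,d\sigma$ under the stereographic change of variables, which is routine but where the dimension restriction $d\ge3$ and the criticality $q=2^*$ enter essentially.
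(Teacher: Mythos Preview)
Your preferred ``alternative, cleaner route'' is exactly the paper's proof: it tests $\mathcal Q_\alpha$ with the Aubin--Talenti bubbles $v_\eps$ transplanted to $\S^d$ via stereographic projection, uses that $\nrm{u_\eps}{2^*}$ is independent of $\eps$ and that $\kappa_{2^*,d}\,\mathsf S_d=1/\alpha_*$, and then verifies explicitly that the weighted $\L^2$ remainder $\delta(d,\eps)=\int_0^\infty(\eps/(\eps^2+r^2))^{d-2}\,r^{d-1}(1+r^2)^{-2}\,dr\to0$ as $\eps\to0$ (with a case split $d\ge5$ versus $d=3,4$), giving $\mu(\alpha)\le\alpha_*$; the lower bound is, as you say, immediate from monotonicity and $\mu(\alpha_*)=\alpha_*$. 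Your first route through non-attainment and concentration-compactness is a legitimate alternative but an unnecessary detour here, since the explicit test family already delivers the sharp upper bound directly.
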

\begin{proof} Consider the Aubin-Talenti optimal functions for Sobolev's inequality and more specifically, let us choose the functions
\[
v_\eps(x):=\(\tfrac\eps{\eps^2+|x|^2}\)^\frac{d-2}2\quad\forall\,x\in\R^d\,,\quad\forall\,\eps>0\,,
\]
which are such that $\nrmRd{v_\eps}{2^*}=\nrmRd{v_1}{2^*}$ is independent of $\eps$. With standard notations (see Appendix~\ref{Sec:Stereographic}), let $\mathrm N\in\S^d$ be the North Pole. Using the stereographic projection $\Sigma$, \emph{i.e.}~for the functions defined for any $y\in\S^d\setminus\{\mathrm N\}$ by
\[
u_\eps(y)=\(\tfrac{|x|^2+1}2\)^\frac{d-2}2v_\eps(x)\quad\mbox{with}\quad x=\Sigma(y)\,,
\]
we find that $\nrm{u_\eps}{2^*}=\nrmRd{v_1}{2^*}$ for any $\eps>0$, so that
\[
\mu(\alpha)\le\mathcal Q_\alpha[u_\eps]=\frac{\nrmRd{\nabla v_\eps}2^2+(\alpha-\alpha_*)\,\iRd{|v_\eps|^2\,\big(\tfrac2{1+|x|^2}\big)^2}}{\kappa_{2^*,d}\,\nrmRd{v_\eps}{2^*}^2}=\alpha_*+4\,|\S^d|^{1-\frac 2d}\,(\alpha-\alpha_*)\,\frac{\delta(d,\eps)}{\nrmRd{v_1}{2^*}^2}
\]
where we have used the fact that $\kappa_{2^*,d}\,\mathsf S_d=1/\alpha_*$ (see Appendix~\ref{Sec:Sobolev}) and
\[
\delta(d,\eps):=\int_0^\infty\(\frac\eps{\eps^2+r^2}\)^{d-2}\frac{r^{d-1}}{(1+r^2)^2}\;dr=\eps^2\int_0^\infty\(\frac1{1+s^2}\)^{d-2}\frac{s^{d-1}}{(1+\eps^2\,s^2)^2}\;ds\,.
\]
One can check that $\lim_{\eps\to0_+}\delta(d,\eps)=0$ since
\[
\delta(d,\eps)\le\eps^2\int_0^\infty\frac{s^{d-1}}{(1+s^2)^{d-2}}\;ds\quad\mbox{if}\quad d\ge5\quad\mbox{and}\quad\delta(d,\eps)\le \,\eps\,c_d\int_0^{+\infty}\frac{ds}{(1+s^2)^2}\quad\mbox{if}\quad d=3\;\mbox{or}\;4\,,
\]
with $c_3=1$ and $c_4=3\,\sqrt3/16$.\end{proof}

The next step is devoted to a lower estimate for the function $\alpha\mapsto \mu(\alpha)$ in the subcritical case, which shows that $\lim_{\alpha\to+\infty}\mu(\alpha)=+\infty$ in contrast with the critical case.
\begin{prop}\label{Cor:CriticalSubCritical} With the notations of Lemma~\ref{Lem:q>2}, if $d\ge3$ and $q\in(2,2^*)$, then for any $\alpha>\alpha_*$ we have
\[\label{criticalestimatebelow}
\alpha>\mu(\alpha)\ge\alpha_*^\vartheta\,\alpha^{1-\vartheta}\,,
\]
with $\vartheta=d\,\frac{q-2}{2\,q}$. For every $s\in (2,2^*)$ if $d\ge3$, or every $s\in(2,+\infty)$ if $d=1$ or $2$, such that $s>q$, we also have that
\[\label{notcriticalestimatebelow}
\alpha>\mu(\alpha)\ge \big(\tfrac d{s-2}\big)^\theta\,\alpha^{1-\theta}\,,
\]
for any $\alpha>d/(s-2)$ and $\theta=\theta(s,q,d):=\frac{s\,(q-2)}{q\,(s-2)}$.
\end{prop}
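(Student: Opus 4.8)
The plan is to derive both lower bounds on $\mu(\alpha)$ from the tight interpolation inequality \eqref{Ineq:Interpolation}, used at a well-chosen exponent, combined with an interpolation (Hölder) argument between $\L^2$, $\L^q$ and a higher $\L^s$, and then an optimization over a scaling parameter. The key structural point is that, because $\mu(\alpha)\,\nrm uq^2\le\nrm{\nabla u}2^2+\alpha\,\nrm u2^2$ is two-homogeneous with an additive parameter $\alpha$, one gains a genuine improvement over the linear bound $\mu(\alpha)\le\alpha$ as soon as the gradient term carries a definite positive fraction of the left-hand side, and this is exactly what the Bidaut-Véron--Véron rigidity quantifies via \eqref{Ineq:Interpolation}.

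First, for the first estimate ($q\in(2,2^*)$, $d\ge3$), I would start from \eqref{Ineq:Interpolation} written at the \emph{critical} exponent $s=2^*$: since $\alpha_*=d/(2^*-2)$, this gives
\[
\nrm{\nabla u}2^2\ge\alpha_*\,\big(\nrm u{2^*}^2-\nrm u2^2\big)\,.
\]
Next, take a minimizer $u$ for $\mu(\alpha)$ normalized by $\nrm uq=1$. Split the left-hand side of \eqref{InterpSphere} as $\nrm{\nabla u}2^2+\alpha\,\nrm u2^2=\big(\nrm{\nabla u}2^2-\alpha_*\,\nrm u2^2\big)+(\alpha+\alpha_*)\,\nrm u2^2-2\alpha_*\nrm u2^2$; more cleanly, combine the critical inequality with the trivial bound to write, for any $t\in[0,1]$,
\[
\nrm{\nabla u}2^2+\alpha\,\nrm u2^2\ge t\,\alpha_*\,\nrm u{2^*}^2+\big(\alpha-t\,\alpha_*\big)\,\nrm u2^2\,.
\]
Then estimate both terms on the right from below in terms of $\nrm uq=1$ using Hölder's inequality on the probability space $(\S^d,d\sigma)$: one has $\nrm u2\ge\nrm uq^{?}$-type bounds in the "wrong" direction, so instead I would bound $\nrm u{2^*}$ from below and $\nrm u2$ from below by interpolating $q$ between $2$ and $2^*$, i.e. $1=\nrm uq\le\nrm u2^{1-\lambda}\,\nrm u{2^*}^{\lambda}$ with $\lambda$ determined by $\frac1q=\frac{1-\lambda}2+\frac{\lambda}{2^*}$, equivalently $\lambda=\vartheta$ with $\vartheta=d\frac{q-2}{2q}$. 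Setting $A=\nrm u2^2$, $B=\nrm u{2^*}^2$, the constraint reads $A^{1-\vartheta}B^{\vartheta}\ge1$, and we must minimize $t\,\alpha_*\,B+(\alpha-t\,\alpha_*)\,A$ over $A,B>0$ subject to $A^{1-\vartheta}B^\vartheta\ge1$. The Lagrange/AM--GM computation gives the minimum $\propto (\alpha-t\alpha_*)^{1-\vartheta}(t\alpha_*)^{\vartheta}$ up to the explicit constant $\vartheta^{-\vartheta}(1-\vartheta)^{-(1-\vartheta)}$; choosing $t=1$ (optimal, since we want the gradient coefficient as large as possible and $\alpha>\alpha_*$ keeps $\alpha-\alpha_*>0$) and bookkeeping the constants yields exactly $\mu(\alpha)\ge\alpha_*^\vartheta\,\alpha^{1-\vartheta}$; the strict inequality $\mu(\alpha)<\alpha$ for $\alpha>\alpha_*$ is already recorded in Section~\ref{Sec:PropertiesAlpha(mu)}.

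For the second estimate I would run the identical argument but with $2^*$ replaced by an arbitrary admissible $s>q$, using \eqref{Ineq:Interpolation} at exponent $s$: $\nrm{\nabla u}2^2\ge\frac d{s-2}\big(\nrm us^2-\nrm u2^2\big)$. Interpolating $q$ between $2$ and $s$ gives $1=\nrm uq\le\nrm u2^{1-\theta}\nrm us^{\theta}$ with $\frac1q=\frac{1-\theta}2+\frac\theta s$, i.e. $\theta=\frac{s(q-2)}{q(s-2)}$, and the same minimization (now with parameter $\frac d{s-2}$ in place of $\alpha_*$, valid for $\alpha>\frac d{s-2}$ so the coefficient $\alpha-\frac d{s-2}$ is positive) produces $\mu(\alpha)\ge\big(\tfrac d{s-2}\big)^{\theta}\alpha^{1-\theta}$ after the constants cancel. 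The main obstacle — really the only delicate point — is the constant bookkeeping in the constrained optimization: one must verify that the AM--GM constant $\theta^{-\theta}(1-\theta)^{-(1-\theta)}$ combines with $\big(\tfrac d{s-2}\big)^\theta$ and the split coefficient $\alpha-\frac d{s-2}$ to give precisely the clean stated bound and not a lossy one; checking that $t=1$ is the optimal split (rather than some interior $t$) and that no better choice of $s$ or of the split is being left on the table is where I would be most careful. Everything else is a direct assembly of \eqref{Ineq:Interpolation}, Hölder on $(\S^d,d\sigma)$, and a one-variable convexity argument.
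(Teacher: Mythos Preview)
Your overall plan---interpolate $\nrm uq$ between $\nrm u2$ and $\nrm us$ via H\"older and then feed in \eqref{Ineq:Interpolation} at exponent~$s$---is the right one, but the execution goes astray at the constrained minimization, and the constants do \emph{not} cancel to the clean stated bound. With $t=1$ your argument actually produces
\[
\mu(\alpha)\ge\vartheta^{-\vartheta}(1-\vartheta)^{-(1-\vartheta)}\,\alpha_*^\vartheta\,(\alpha-\alpha_*)^{1-\vartheta}\,,
\]
which is a \emph{different} lower bound: it vanishes as $\alpha\to\alpha_*^+$, whereas the claimed $\alpha_*^\vartheta\alpha^{1-\vartheta}$ equals $\alpha_*$ there. (Your bound does dominate the stated one once $\alpha\ge\alpha_*/\vartheta$, and the gap $(\alpha_*,\alpha_*/\vartheta]$ can be patched using $\mu(\alpha)=\alpha$ there, but this is extra work you have not done, and the assertion that the bookkeeping ``yields exactly'' the stated bound is simply false.)

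The paper sidesteps all of this with one structural trick: instead of splitting the numerator $N:=\nrm{\nabla u}2^2+\alpha\,\nrm u2^2$ additively and then running AM--GM, apply H\"older directly to the \emph{denominator} and reuse the full numerator in both factors. From $\nrm uq\le\nrm us^\theta\,\nrm u2^{1-\theta}$ one gets
\[
\mathcal Q_\alpha[u]\;\ge\;\frac{N}{\nrm us^{2\theta}\,\nrm u2^{2(1-\theta)}}\;=\;\Big(\frac{N}{\nrm us^2}\Big)^{\!\theta}\Big(\frac{N}{\nrm u2^2}\Big)^{\!1-\theta}\;\ge\;\Big(\frac d{s-2}\Big)^{\!\theta}\alpha^{1-\theta}\,,
\]
the last step using $N\ge\nrm{\nabla u}2^2+\tfrac d{s-2}\nrm u2^2\ge\tfrac d{s-2}\nrm us^2$ (which is \eqref{Ineq:Interpolation} at exponent~$s$, available since $\alpha>\tfrac d{s-2}$) for the first factor and the trivial $N\ge\alpha\,\nrm u2^2$ for the second. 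The critical case $s=2^*$ is then the limit $\theta\to\vartheta$. No splitting parameter~$t$, no AM--GM constant, no case analysis: the stated bound drops out in one line. The moral is that your additive decomposition of $N$ throws away information that the multiplicative decomposition of $\nrm uq^2$ preserves.
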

\begin{proof} The first case can be seen as a limit case of the second one as $s\to2^*$ and $\vartheta=\theta(2^*,q,d)$. Using H\"older's inequality, we can estimate $\nrm uq$ by
\[
\nrm uq\leq\nrm us^\theta\,\nrm u2^{1-\theta}
\]
and get the result using
\[
\mathcal Q_\alpha[u]\ge\(\frac{\nrm{\nabla u}2^2+\alpha\,\nrm u2^2}{\nrm us^2}\)^\theta\,\(\frac{\nrm{\nabla u}2^2+\alpha\,\nrm u2^2}{\nrm u2^2}\)^{1-\theta}\ge\big(\tfrac d{s-2}\big)^\theta\,\alpha^{1-\theta}\,.
\]
\end{proof}

\begin{prop}\label{prop-upper} With the notations of Lemma~\ref{Lem:q>2}, for every $q\in (2,2^*)$ we have
\[
\limsup_{\alpha\to+\infty}\alpha^{\vartheta-1}\mu(\alpha)\le\frac{\mathsf K_{q,d}}{\kappa_{q,d}}\,.
\]
\end{prop}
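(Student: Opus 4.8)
The plan is to derive the upper bound on $\mu(\alpha)$ by feeding into the quotient $\mathcal Q_\alpha$ (defined in Section~\ref{Sec:PropertiesAlpha(mu)}) a test function that \emph{concentrates}, as $\alpha\to+\infty$, around a single point of $\S^d$ at the natural length scale $\alpha^{-1/2}$, and which, after rescaling, is an almost optimal function for the Euclidean Gagliardo--Nirenberg--Sobolev inequality~\eqref{Ineq:GNS}. Since the sphere is locally Euclidean, the three integrals defining $\mathcal Q_\alpha$ should then, after the correct rescaling in $\alpha$, converge to the corresponding Euclidean integrals, the only residual constant being the volume normalization $\kappa_{q,d}=|\S^d|^{1-2/q}$.

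Concretely, I would first fix $\eps>0$ and, using the density of $C_c^\infty(\R^d)$ in $\H^1(\R^d)$ together with the continuity of $v\mapsto\big(\nrmRd{\nabla v}2^2+\nrmRd v2^2\big)/\nrmRd vq^2$ on $\H^1(\R^d)\setminus\{0\}$ (which relies on the Sobolev embedding $\H^1(\R^d)\hookrightarrow\L^q(\R^d)$, valid since $q<2^*$), choose a nonnegative $v\in C_c^\infty(\R^d)$ with $\mathrm{supp}\,v\subset B(0,\rho)$ and $\nrmRd{\nabla v}2^2+\nrmRd v2^2\le(\mathsf K_{q,d}+\eps)\,\nrmRd vq^2$. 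For $\alpha\ge1$ I set $\lambda=\sqrt\alpha$ and $v_\lambda(x)=v(\lambda x)$, so that $\mathrm{supp}\,v_\lambda\subset B(0,\rho/\lambda)$, and I transplant $v_\lambda$ to the sphere: picking $P\in\S^d$ and using geodesic normal coordinates at $P$ (the exponential map $\exp_P$), which for $\alpha$ large is a diffeomorphism of $B(0,\rho/\lambda)$ onto a small geodesic ball around $P$, I define $u_\alpha:=v_\lambda\circ\exp_P^{-1}$ on that ball and $u_\alpha:=0$ elsewhere. This $u_\alpha$ is Lipschitz, hence admissible in $\mathcal Q_\alpha$, and $\mu(\alpha)\le\mathcal Q_\alpha[u_\alpha]$.

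The core of the argument is the comparison of the spherical and Euclidean integrals. In normal coordinates at $P$ the metric satisfies $|g_{ij}(x)-\delta_{ij}|\le C\,|x|^2$ and $\sqrt{\det g(x)}=1+O(|x|^2)$, uniformly on a fixed ball; restricted to $\mathrm{supp}\,u_\alpha\subset B(0,\rho/\lambda)$ this yields multiplicative errors of the form $1+O(\rho^2/\alpha)$, and one must keep track of the correct direction of each: an upper bound $\nrm{\nabla u_\alpha}2^2\le\tfrac{1+O(\rho^2/\alpha)}{|\S^d|}\,\nrmRd{\nabla v_\lambda}2^2$, an upper bound $\nrm{u_\alpha}2^2\le\tfrac{1+O(\rho^2/\alpha)}{|\S^d|}\,\nrmRd{v_\lambda}2^2$, and a lower bound $\nrm{u_\alpha}q^2\ge\tfrac{(1-O(\rho^2/\alpha))^{2/q}}{|\S^d|^{2/q}}\,\nrmRd{v_\lambda}q^2$ (the $1/|\S^d|$ coming from $d\sigma=d\omega/|\S^d|$). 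Combining these with the scaling identities $\nrmRd{\nabla v_\lambda}2^2=\lambda^{2-d}\nrmRd{\nabla v}2^2$, $\nrmRd{v_\lambda}2^2=\lambda^{-d}\nrmRd v2^2$, $\nrmRd{v_\lambda}q^2=\lambda^{-2d/q}\nrmRd vq^2$, and using $|\S^d|^{2/q}/|\S^d|=|\S^d|^{2/q-1}=\kappa_{q,d}^{-1}$, one gets
\[
\mathcal Q_\alpha[u_\alpha]\le\kappa_{q,d}^{-1}\,\frac{1+O(\rho^2/\alpha)}{\big(1-O(\rho^2/\alpha)\big)^{2/q}}\;\frac{\lambda^{2-d}\,\nrmRd{\nabla v}2^2+\alpha\,\lambda^{-d}\,\nrmRd v2^2}{\lambda^{-2d/q}\,\nrmRd vq^2}\,.
\]
The choice $\lambda=\sqrt\alpha$ is precisely the one that balances the gradient term and the $\alpha$-weighted $\L^2$ term, since then $\lambda^{2-d}=\alpha\,\lambda^{-d}=\alpha^{1-d/2}$, while $\lambda^{2-d+2d/q}=\alpha^{1-\vartheta}$ because $\vartheta=\tfrac d2-\tfrac dq$. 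Hence $\alpha^{\vartheta-1}\mathcal Q_\alpha[u_\alpha]\le\kappa_{q,d}^{-1}\,\frac{1+O(\rho^2/\alpha)}{(1-O(\rho^2/\alpha))^{2/q}}\,\frac{\nrmRd{\nabla v}2^2+\nrmRd v2^2}{\nrmRd vq^2}$, and letting $\alpha\to+\infty$ and then $\eps\to0$ gives the claim.

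The only genuinely delicate point is the third step: one must verify that every error factor produced by the curvature of $\S^d$ carries the sign appropriate for an upper bound on $\mathcal Q_\alpha[u_\alpha]$ and is of the form $1+o(1)$ once evaluated on the shrinking support — this is where the concentration scale $\alpha^{-1/2}\to0$ is essential. One could equally well transplant $v_\lambda$ through the inverse stereographic projection of Appendix~\ref{Sec:Stereographic} (after rescaling the Euclidean variable by a constant so that the projection becomes an isometry at the base point), which would parallel the construction in the proof of Proposition~\ref{Prop:Critical}; I favour the normal-coordinate version because the estimates $g_{ij}(x)=\delta_{ij}+O(|x|^2)$ and $\sqrt{\det g(x)}=1+O(|x|^2)$ make the bookkeeping most transparent. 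The remaining ingredients — the density argument, the scaling identities, and the exponent algebra — are routine.
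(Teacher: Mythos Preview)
Your argument is correct and follows the same strategy as the paper's: plug into $\mathcal Q_\alpha$ a test function concentrating at scale $\alpha^{-1/2}$ around a point, so that after rescaling one recovers the Euclidean Gagliardo--Nirenberg quotient up to the volume factor $\kappa_{q,d}$. The only difference is in the implementation: the paper transplants an \emph{exact} optimizer $v$ of $\mathsf K_{q,d}$ via the stereographic projection (Appendix~\ref{Sec:Stereographic}), obtaining explicit weight factors $\big(\tfrac2{1+|x|^2}\big)^{d-(d-2)q/2}$ and $\big(\tfrac2{1+|x|^2}\big)^2$ and concluding by dominated convergence, whereas you use a compactly supported near-optimizer and geodesic normal coordinates, reducing the curvature contribution to metric errors $g_{ij}=\delta_{ij}+O(|x|^2)$ on the shrinking support. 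Your version has the minor advantage of not invoking existence of GNS optimizers; the paper's version dovetails with the computation in Proposition~\ref{Prop:Critical} and makes the constant $\alpha_*$ appear explicitly. Either route is adequate here.
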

\begin{proof} Let $v$ be an optimal function for $\mathsf K_{q,d}$ and define for any $x\in\R^d$ the function
\[
v_\alpha(x):=v\(2\,\sqrt{\alpha-\alpha_*}\,x\)
\]
with $\alpha_*=\frac 14\,d\,(d-2)$ and $\alpha>\alpha_*$, so that
\[
\iRd{|\nabla v_\alpha|^2} =2^{2-d}\,(\alpha-\alpha_*)^{1-\frac{d}{2}} \iRd{|\nabla v|^2}\,,
\]
\[
\iRd{|v_\alpha|^q\,\Big(\tfrac2{1+|x|^2}\Big)^{d-(d-2)\frac q2}} =2^{-(d-2)\frac q2}\,(\alpha-\alpha_*)^{-\frac d2}\iRd{|v|^q\,\big(1+\tfrac{|x|^2}{4\,(\alpha-\alpha_*)}\big)^{-d+(d-2)\frac q2}}\,.
\]
Now we observe that the function $u_\alpha(y):=\big(\tfrac{|x|^2+1}2\big)^{(d-2)/2}\,v_\alpha(x)$, where $y=\Sigma^{-1}(x)$ and $\Sigma$ is the stereographic projection (see Appendix~\ref{Sec:Stereographic}), is such that
\[
\mathcal Q_\alpha[u_\alpha]=\frac1{\kappa_{q,d}}\,\frac{\iRd{|\nabla v_\alpha|^2}+(\alpha-\alpha_*)\iRd{|v_\alpha|^2 \,\Big(\tfrac2{1+|x|^2}\Big)^2}}{\Big[\iRd{|v_\alpha|^q\,\Big(\tfrac2{1+|x|^2}\Big)^{d-(d-2)\frac q2}}\Big]^\frac 2q}\,.
\]
Passing to the limit as $\alpha\to+\infty$, we get
\[
\lim_{\alpha\to+\infty}\iRd{|v|^q\,\big(1+\tfrac{|x|^2}{4\,(\alpha-\alpha_*)}\big)^{-d+(d-2)\frac q2}}=\iRd{|v|^q}
\]
by Lebesgue's theorem of dominated convergence. The limit also holds with $q$ replaced by $2$. This proves that
\[
\mathcal Q_\alpha[u_\alpha]=(\alpha-\alpha_*)^{1-\frac{d}2+\frac{d}q}\,\(\frac{\mathsf K_{q,d}}{\kappa_{q,d}}+o(1)\)\quad\mbox{as}\quad\alpha\to+\infty
\]
which concludes the proof because $\vartheta=d\,(q-2)/(2\,q)$.
\end{proof}

\subsection{The semi-classical regime: behavior of the function \texorpdfstring{$\alpha\mapsto\mu(\alpha)$ as $\alpha\to+\infty$}{alpha mapsto mu(alpha) as alpha to infty}}\label{Sec:EstimatesMu(alpha)}

Assume that $q\in(2,2^*)$. If we combine the results of Propositions~\ref{Cor:CriticalSubCritical} and~\ref{prop-upper}, we know that $\mu(\alpha)\sim\alpha^{1-\vartheta}$ as $\alpha\to+\infty$ if $d\ge3$. If $d=1$ or $2$, we know that $\lim_{\alpha\to+\infty}\mu(\alpha)=+\infty$ with a growth at least equivalent to $\alpha^{2/q-\eps}$ with $\eps>0$, arbitrarily small, according to Proposition~\ref{Cor:CriticalSubCritical}, and at most equivalent to $\alpha^{1-\vartheta}$ by Proposition~\ref{prop-upper}. To complete the proof of Lemma~\ref{Lem:q>2}, it remains to determine the precise behavior of $\mu(\alpha)$ as $\alpha\to+\infty$.
\begin{prop}\label{prop-asympt}
With the notations of Lemma~\ref{Lem:q>2}, for every $q\in (2,2^*)$, with $\vartheta=d\,\frac{q-2}{2\,q}$ we have
\[
\mu(\alpha)=\frac{\mathsf K_{q,d}}{\kappa_{q,d}}\,\alpha^{1-\vartheta}(1+o(1))\quad\mbox{as}\quad\alpha\to+\infty\,.
\]
\end{prop}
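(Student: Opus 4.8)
The plan is to combine the lower bound from Proposition~\ref{Cor:CriticalSubCritical} (and its limiting form) with the upper bound from Proposition~\ref{prop-upper}, the only missing piece being a matching lower bound for $\alpha^{\vartheta-1}\mu(\alpha)$ with the sharp constant $\mathsf K_{q,d}/\kappa_{q,d}$. In dimension $d\ge3$ this is essentially already done: Proposition~\ref{Cor:CriticalSubCritical} gives $\mu(\alpha)\ge\alpha_*^\vartheta\,\alpha^{1-\vartheta}$, but the constant $\alpha_*^\vartheta$ is not sharp, so I cannot simply quote it. Instead the argument must go through a concentration/blow-up analysis of the minimizers $u_\alpha$ of $\mathcal Q_\alpha$ on $\S^d$ as $\alpha\to+\infty$, showing that after rescaling they converge to an optimizer of the Euclidean Gagliardo--Nirenberg--Sobolev inequality \eqref{Ineq:GNS}.

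First I would fix, for each $\alpha>\alpha_*$ (or $\alpha>0$ if $d=1,2$), a minimizer $u_\alpha\in\H^1(\S^d)$ of $\mathcal Q_\alpha$ with $\nrm{u_\alpha}q=1$, solving the Euler--Lagrange equation \eqref{EL}. From $\mu(\alpha)=\mathcal Q_\alpha[u_\alpha]$ and the already-established two-sided bounds $\alpha_*^\vartheta\,\alpha^{1-\vartheta}\le\mu(\alpha)\le\alpha$ (respectively the weaker polynomial bounds when $d=1,2$), one reads off the orders of magnitude $\nrm{\nabla u_\alpha}2^2=O(\alpha^{1-\vartheta})$ and $\nrm{u_\alpha}2^2=O(\alpha^{-\vartheta})\to0$: the $\L^2$ mass of $u_\alpha$ tends to zero while its $\L^q$ norm stays equal to $1$, which forces concentration. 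Working in a stereographic chart centered (after a rotation) at a point where $|u_\alpha|$ concentrates, I would introduce the rescaled functions
\[
w_\alpha(x):=(\alpha-\alpha_*)^{-\frac{d-2}{4}}\,u_\alpha\big(\Sigma^{-1}(x/(2\sqrt{\alpha-\alpha_*}))\big)\,\big(\tfrac{1+|x/(2\sqrt{\alpha-\alpha_*})|^2}{2}\big)^{\frac{d-2}{2}}
\]
(with the evident modification of the exponents when $d=1,2$), chosen exactly to invert the computation in the proof of Proposition~\ref{prop-upper}: under this change of variables the conformal weights $\big(\tfrac2{1+|x|^2}\big)^{\cdot}$ converge locally uniformly to $1$, so the sphere functional $\mathcal Q_\alpha[u_\alpha]$ becomes $(\alpha-\alpha_*)^{1-\vartheta}\,\kappa_{q,d}^{-1}$ times a functional on $\R^d$ that converges to the Euclidean quotient $\big(\nrmRd{\nabla w}2^2+\nrmRd w2^2\big)/\nrmRd wq^2$.

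The key step, and the main obstacle, is controlling the passage to the limit: I must show that $w_\alpha$ does not vanish weakly, does not split into several bubbles, and does not lose mass at infinity, so that the limit $w$ is a nontrivial $\H^1(\R^d)$ function and the GNS quotient is lower semicontinuous along the sequence, giving $\liminf_{\alpha\to\infty}\kappa_{q,d}(\alpha-\alpha_*)^{\vartheta-1}\mu(\alpha)\ge\mathsf K_{q,d}$. This is a standard but delicate concentration-compactness argument: one uses the subcriticality $q<2^*$ (which gives compactness of the relevant Sobolev embedding on bounded sets and rules out the critical Sobolev bubble), together with the Euler--Lagrange equation \eqref{EL} and elliptic regularity to get uniform local bounds and to exclude loss of mass at infinity — here the exponential decay of solutions of $-\Delta w+w=w^{q-1}$ on $\R^d$, or a Lions-type concentration function argument on the sphere, does the job. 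Combined with the upper bound $\limsup_{\alpha\to\infty}\alpha^{\vartheta-1}\mu(\alpha)\le\mathsf K_{q,d}/\kappa_{q,d}$ from Proposition~\ref{prop-upper} and the elementary fact that $(\alpha-\alpha_*)^{1-\vartheta}/\alpha^{1-\vartheta}\to1$, this yields $\mu(\alpha)=\tfrac{\mathsf K_{q,d}}{\kappa_{q,d}}\,\alpha^{1-\vartheta}(1+o(1))$, which completes the proof and, with it, the proof of Lemma~\ref{Lem:q>2}.
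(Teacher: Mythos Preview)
Your approach is correct and follows the same overall strategy as the paper: both proofs establish the sharp lower bound $\liminf_{\alpha\to\infty}\alpha^{\vartheta-1}\mu(\alpha)\ge\mathsf K_{q,d}/\kappa_{q,d}$ by a concentration--compactness analysis of the minimizers $u_\alpha$ (showing $\nrm{u_\alpha}2\to0$ from $\mu(\alpha)=o(\alpha)$, hence concentration, then blowing up via the stereographic chart exactly as in Proposition~\ref{prop-upper}), and both then combine this with the upper bound of Proposition~\ref{prop-upper}.

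The one noteworthy difference is how multi-bubbling is handled. You propose to \emph{exclude} splitting into several bubbles, invoking the Euler--Lagrange equation, elliptic regularity and exponential decay to force single-point concentration; this is doable but is indeed, as you say, the most delicate step. The paper instead argues by contradiction and \emph{allows} a possibly infinite profile decomposition $\sum_i\zeta_i=1$ with $\zeta_i=\lim_n\int_{B(y_i,r_{i,n})}|u_n|^q\,d\sigma$: after blow-up each bubble contributes at least $\tfrac{\mathsf K_{q,d}}{\kappa_{q,d}}\,\zeta_i^{2/q}$ to $(\alpha_n-\alpha_*)^{\vartheta-1}\mathcal Q_{\alpha_n}[u_n]$, and since $q>2$ one has $\sum_i\zeta_i^{2/q}\ge\big(\sum_i\zeta_i\big)^{2/q}$, which already yields the contradiction without ever proving there is a single bubble. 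The paper's route is thus a bit shorter and avoids the PDE regularity/decay argument, while your route gives more: it actually identifies the asymptotic profile of $u_\alpha$ as a rescaled Euclidean Gagliardo--Nirenberg optimizer.
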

\begin{proof} Suppose by contradiction that there is a positive constant~$\eta$ and a sequence $\seq\alpha n$ such that $\lim_{n\to+\infty}\alpha_n=+\infty$ and
\be{asumption-33}
\lim_{n\to+\infty}\alpha_n^{\vartheta-1}\mu(\alpha_n)\le\frac{\mathsf K_{q,d}}{\kappa_{q,d}}-\eta\,.
\ee
Consider a sequence $\seq un$ of functions in $\H^1(\S^d)$ such that $\mathcal Q_{\alpha_n}[u_n]=\mu(\alpha_n)$ and $\nrm{u_n}q=1$ for any $n\in\N$. From~\eqref{asumption-33}, we know that
\[
\alpha_n\,\nrm{u_n}2^2\le\mathcal Q_{\alpha_n}[u_n]=\mu(\alpha_n)\le\alpha_n^{1-\vartheta}\,\(\frac{\mathsf K_{q,d}}{\kappa_{q,d}}-\eta\)(1+o(1))\quad\mbox{as}\quad n\to+\infty\,
\]
that is
\[
\limsup_{n\to+\infty}\alpha_n^\vartheta\,\nrm{u_n}2^2\le\frac{\mathsf K_{q,d}}{\kappa_{q,d}}-\eta\,.
\]
The normalization $\nrm{u_n}q=1$ for any $n\in\N$ and the limit $\lim_{n\to+\infty}\nrm{u_n}2=0$ mean that the sequence $\seq un$ concentrates: there exists a sequence $\seq yi$ of points in $\S^d$ (eventually finite) and two sequences of positive numbers $\seq\zeta{i}$ and $(r_{i,n})_{i,n\in\N}$ such that $\lim_{n\to+\infty}r_{i,n}=0$, $\Sigma_{i\in\N}\zeta_i=1$ and $\int_{\S^d\cap B(y_i,r_{i,n})}|u_{i,n}|^q\,d\sigma=\zeta_i+o(1)$, where $u_{i,n}\in\H^1(\S^d)$, $u_{i,n} =u_n$ on $\S^d\cap B(y_i,r_{i,n})$ and $\mbox{supp }u_{i,n}\subset \S^d\cap B(y_i,2\,r_{i,n})$. Here $o(1)$ means that uniformly with respect to $i$, the remainder term converges towards $0$ as $n\to+\infty$. A computation similar to those of the proof of Proposition \ref{prop-upper}, we can \emph{blow up} each function $u_{i,n}$ and prove
\[
(\alpha_n-\alpha_*)^{\vartheta-1}\iS{\(|\nabla u_{i,n}|^2+\alpha_n\,|u_{i,n}|^2\)}\ge\frac{\mathsf K_{q,d}}{\kappa_{q,d}}\,\zeta_i^{2/q}+o(1)\quad\forall\,i\,.
\]
Let us choose an integer $N$ such that $\(\Sigma_{i=1}^N \zeta_i\)^{2/q}>1-\frac{\kappa_{q,d}\,\eta}{2\,\mathsf K_{q,d}}$. Then we find that
\[
(\alpha_n-\alpha_*)^{\vartheta-1}\iS{\(|\nabla u_n|^2+\alpha_n\,|u_n|^2\)}\ge\frac{\mathsf K_{q,d}}{\kappa_{q,d}}\,\Sigma_1^N\zeta_i^{2/q}+o(1)\ge\frac{\mathsf K_{q,d}}{\kappa_{q,d}}\,\(\Sigma_1^N\zeta_i\)^{2/q}+o(1)\ge \frac{\mathsf K_{q,d}}{\kappa_{q,d}}-\frac\eta2+o(1)\,,
\]
a contradiction with \eqref{asumption-33}.
\end{proof}

For details on the behavior of $\mathsf K_{q,d}$ as $q$ varies, see Proposition \ref{Prop:limitsK}. Collecting all results of this section, this completes the proof of Lemma~\ref{Lem:q>2}.

\section{Spectral estimates for the Schr\"odinger operator on the sphere}\label{Sec:Spectral}

This section is devoted to the proof of Theorem \ref{Thm1}. As a consequence of the results of Lemma~\ref{Lem:q>2}, the function $\alpha\mapsto\mu(\alpha)$ is invertible, of inverse $\mu\mapsto\alpha(\mu)$, if $d=1$, $2$ or $d\ge3$ and $q<2^*$, and we have the inequality
\be{GeneralizedInterp2}
\iS{|\nabla u|^2}-\mu\(\iS{|u|^q}\)^\frac 2q\ge-\,\alpha(\mu)\iS{|u|^2}\quad\forall\,u\in\H^1(\S^d,d\sigma)\,,\quad\forall\,\mu>0\,.
\ee
Moreover, the function $\mu\mapsto\alpha(\mu)$ is monotone increasing, convex, satisfies $\alpha(\mu)=\mu$ for any $\mu\in(0,\frac d{q-2}]$ and $\alpha(\mu)>\mu$ for any $\mu>d/(q-2)$.

Consider the Schr\"odinger operator $-\Delta-V$ for some function $V\in\L^p(\S^d)$ and the corresponding energy functional
\[
\mathcal E[u]\;:=\iS{|\nabla u|^2}-\iS{V\,|u|^2}\,.
\]
Let
\[
\lambda_1(-\Delta-V)\quad:=\inf_{\begin{array}{c}u\in\H^1(\S^d,d\sigma)\\ \iS{|u|^2}=1\end{array}}\mathcal E[u]\,.
\]
By H\"older's inequality, we have
\[
\mathcal E[u]\ge\iS{|\nabla u|^2}-\nrm{V_+}p\,\nrm uq^2\,,
\]
with $\frac 1p+\frac 2q=1$. From Section~\ref{Sec:Interpolationq>2}, with $\mu=\nrm{V_+}p$, we deduce
\[
\mathcal E[u]\ge-\,\alpha(\mu)\,\nrm u2^2\quad\forall\,u\in\H^1(\S^d,d\sigma)\,,\quad\forall\,V\in\L^p(\S^d)\,,
\]
which amounts to a Keller-Lieb-Thirring inequality on the sphere~\eqref{KLT}, or equivalently
\[\label{Master}
\iS{|\nabla u|^2}-\iS{V\,|u|^2}+\alpha\(\nrm{V_+}p\)\iS{|u|^2} \ge 0 \quad\forall\,u\in\H^1(\S^d,d\sigma) \,,\quad\forall\,V\in\L^p(\S^d)\,.
\]
Notice that this inequality contains simultaneously \eqref{KLT} and \eqref{GeneralizedInterp2}, by optimizing either on $u$ or on $V$.

Optimality in \eqref{KLT} still needs to be proved. This can be done by taking an arbitrary $\mu\in(0,\infty)$ and considering an optimal function for \eqref{GeneralizedInterp2}, for which we have
\[
\iS{|\nabla u|^2}-\mu\(\iS{|u|^q}\)^\frac 2q=\alpha(\mu)\iS{|u|^2}\,.
\]
Because the above expression is homogeneous of degree two, there is no restriction to assume that $\iS{|u|^q}=1$ and, since the solution is optimal, it solves the Euler-Lagrange equation
\[
-\,\Delta u-V\,u=\alpha(\mu)\,u
\]
with $V=\mu\,u^{q-2}$, such that
\[
\nrm{V_+}p=\mu\,\nrm uq^{q/p}=\mu\,.
\]
Hence such a function $V$ realizes the equality in \eqref{KLT}.

Taking into account Lemma~\ref{Lem:q>2} and \eqref{Eq:KLT-GNS}, this completes the proof of Theorem \ref{Thm1} in the general case. The case $d=1$ and $\gamma=1/2$ has to be treated specifically. Using $u\equiv1$ as a test function, we know that $|\lambda_1(-\Delta-V)|\le\mu=\int_{\S^1}V\,dx$. On the other hand consider $u\in\H^1(\S^1)$ such that $\|u\|_{\L^2(\S^1)}=1$. Since $\H^1(\S^1)$ is embedded into $C^{0,1/2}(\S^1)$, there exists $x_0\in\S^1\approx[0,2\,\pi)$ such that $u(x_0)=1$ and
\[
\,|u(x)|^2-1\,=2\int_{x_0}^xu(y)\,u'(y)\,dy=2\int_{x_0+2\pi}^xu(y)\,u'(y)\,dy
\]
can be estimated by
\begin{multline*}
\left|\,|u(x)|^2-1\,\right|\le2\int_{x_0}^x|u(y)|\,|u'(y)|\,dy=2\int_{x_0+2\pi}^x|u(y)|\,|u'(y)|\,dy\\
\le\int_0^{2\pi}|u(y)|\,|u'(y)|\,dy\le\(\int_0^{2\pi}|u(y)|^2\,dy\int_0^{2\pi}|u'(y)|^2\,dy\)^{1/2}
\end{multline*}
using the Cauchy-Schwarz inequality, that is
\[
\left|\,|u(x)|^2-1\,\right|\le2\,\pi\,\|u'\|_{\L^2(\S^1)}\,,
\]
since $\|u'\|_{\L^2(\S^1)}^2=\frac 1{2\,\pi}\int_0^{2\pi}|u'(y)|^2\,dy$ and $\|u\|_{\L^2(\S^1)}^2=\frac 1{2\,\pi}\int_0^{2\pi}|u(y)|^2\,dy=1$ (recall that $d\sigma$ is a probability measure). Thus we get
\[
|u(x)|^2\le1+2\,\pi\,\|u'\|_{\L^2(\S^1)}\,,
\]
{}from which it follows that
\[
\lambda_1(-\Delta-V)\ge\|u'\|_{\L^2(\S^1)}^2-\mu\(1+2\,\pi\,\|u'\|_{\L^2(\S^1)}\)\ge-\mu-\pi^2\,\mu^2\,.
\]
This shows that $\mu\le\alpha(\mu)\le\mu+\pi^2\,\mu^2$. By the Arzel\`a-Ascoli theorem, the embedding of $\H^1(\S^1)$ into $C^{0,1/2}(\S^1)$ is compact. When $d=1$ and $\gamma=1/2$, the proof of the asymptotic behavior of $\alpha(\mu)$ as $\mu\to+\infty$ can then be completed as in the other cases.

\section{Spectral inequalities in the case of positive potentials}\label{Sec:Confining}

In this section we address the case of Schr\"odinger operators $-\Delta +W$ where $W$ is a positive potential on $\S^d$ and we derive estimates from below for the first eigenvalue of such operators. In order to do so, we first study interpolation inequalities in the Euclidean space $\R^d$, like those studied in Section \ref{Sec:Interpolationq>2} (for $q>2$).

For this purpose, let us define for $q\in (0,2)$ the constant
\[
\mathsf K^*_{q,d} :=\inf_{v\in\H^1(\R^d)\setminus\{0\}}\frac{\nrmRd{\nabla v}2^2+\nrmRd vq^2}{\nrmRd v2^2}\,,
\]
that is the the optimal constant in the Gagliardo-Nirenberg-Sobolev inequality
\be{GNS2}
\mathsf K^*_{q,d}\,\nrmRd v2^2\le\nrmRd{\nabla v}2^2+\nrmRd vq^2\quad\forall\,v\in\H^1(\R^d)
\ee
(with the convention that the r.h.s.~is infinite if $|v|^q$ is not integrable).

The optimal constant $\L_{-\gamma,d}^1$ in~\eqref{L-gamma,d} is such that
\be{Eq:KLT-GNS2}
\L_{-\gamma,d}^1:=\( \mathsf K^*_{q,d}\)^{-\gamma}\quad\mbox{with}\quad q=2\,\frac{2\,\gamma-d}{2\,\gamma-d+2}\,.
\ee
See Appendix~\ref{Sec:GNS-KLT2} for a proof. Let us define the exponent
\[
\delta:=\frac{2\,q}{2\,d-q\,(d-2)}\,.
\]
\begin{lem}\label{Lem:q<2} Let $q\in(0,2)$ and $d\ge1$. Then there exists a concave increasing function $\nu:\R^+\to\R^+$ with the following properties:
\[
\nu(\beta)\le\beta\quad\forall\,\beta>0\quad\mbox{and}\quad\nu(\beta)<\beta\quad\forall\,\beta\in\big(\tfrac d{2-q},+\infty\big)\,,
\]
\[
\nu(\beta)=\beta\quad\forall\,\beta\in\big[0,\tfrac d{2-q}\big]\quad\mbox{if}\quad q\in [1,2)\,,\quad\mbox{and}\quad\;\lim_{\beta\to 0_+}\frac{\nu(\beta)}{\beta}=1\quad\mbox{if}\quad q\in(0,1)\,,
\]
\[
\nu(\beta)=\mathsf K^*_{q,d}\,\(\kappa_{q,d}\,\beta\)^\delta\,(1+o(1))\quad\mbox{as}\quad\beta\to+\infty\;,
\]
such that
\be{InterpSphere2}
\nrm{\nabla u}2^2+\beta\,\nrm uq^2\ge\nu(\beta)\,\nrm u2^2\quad\forall\,u\in\H^1(\S^d)\,.
\ee
\end{lem}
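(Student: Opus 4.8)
The plan is to mimic the structure that led to Lemma~\ref{Lem:q>2}, but now for the subcritical exponent $q\in(0,2)$. For any $\beta>0$, set
\[
\mathcal R_\beta[u]:=\frac{\nrm{\nabla u}2^2+\beta\,\nrm uq^2}{\nrm u2^2}\,,\qquad u\in\H^1(\S^d)\setminus\{0\}\,,
\]
and define $\nu(\beta):=\inf_u\mathcal R_\beta[u]$. First I would check that this infimum is attained: since $q<2$, the functional $u\mapsto\nrm uq^2$ is continuous on $\L^2(\S^d)$ and the embedding $\H^1(\S^d)\hookrightarrow\L^2(\S^d)$ is compact, so a minimizing sequence (normalized by $\nrm u2=1$) has a subsequence converging strongly in $\L^2$ and weakly in $\H^1$; lower semicontinuity of the gradient term and continuity of the $\L^q$ term give a minimizer $u$. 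By two-homogeneity of all three terms, the Lagrange multiplier is exactly $\nu(\beta)$, so $u$ solves $-\Delta u+\beta\,\nrm uq^{2-q}\,u^{q-1}=\nu(\beta)\,u$, and \eqref{InterpSphere2} follows. The elementary properties come next: taking $u\equiv1$ gives $\nu(\beta)\le\beta$ for all $\beta>0$; $\beta\mapsto\mathcal R_\beta[u]$ is affine increasing in $\beta$, so $\nu$ is concave and nondecreasing, and strict monotonicity follows by the same comparison argument as in the $q>2$ case (if $\nu(\beta_1)=\nu(\beta_2)$ with $\beta_1<\beta_2$, plug a minimizer for $\beta_2$ into $\mathcal R_{\beta_1}$ to get a contradiction).

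For the flat region, the key is again the Bidaut-Véron--Véron rigidity, i.e.\ inequality~\eqref{Ineq:Interpolation}, which the excerpt asserts holds for all $q\in[1,2)$: it reads $\nrm{\nabla u}2^2\ge\tfrac d{2-q}\big(\nrm u2^2-\nrm uq^2\big)$ (note the sign flip of the bracket when $q<2$, since then $\nrm uq\le\nrm u2$). Rearranging gives $\nrm{\nabla u}2^2+\tfrac d{2-q}\nrm uq^2\ge\tfrac d{2-q}\nrm u2^2$, i.e.\ $\nu(\beta)\ge\beta$ for $\beta\le d/(2-q)$ when $q\in[1,2)$; combined with $\nu(\beta)\le\beta$ this gives equality on $[0,d/(2-q)]$, with constants optimal. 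That $\nu(\beta)<\beta$ for $\beta>d/(2-q)$ I would get by Taylor-expanding $\mathcal R_\beta$ around $u\equiv1$ along $u=1+\eps\varphi$ with $-\Delta\varphi=d\varphi$, $\iS\varphi=0$: one computes $\mathcal R_\beta[1+\eps\varphi]=\beta+\big[d-\beta(2-q)\big]\eps^2\iS{|\varphi|^2}+o(\eps^2)$, whose bracket is negative precisely when $\beta>d/(2-q)$. For $q\in(0,1)$, where \eqref{Ineq:Interpolation} is not available to pin down a flat piece, I would instead argue only $\lim_{\beta\to0_+}\nu(\beta)/\beta=1$: the upper bound is $\nu(\beta)\le\beta$; for the lower bound, from the minimizer normalized by $\nrm u2=1$ one has $\nrm{\nabla u}2^2\le\nu(\beta)\le\beta$, and since $\nrm uq\le\nrm u2=1$ by Jensen ($d\sigma$ being a probability measure and $q<2$), one also needs a reverse control — here I would use that $\nrm uq^q\ge1-C\,\nrm{\nabla u-\text{(mean)}}{\,}$ type estimates, or more simply Poincaré plus the fact that $\nrm{\nabla u}2^2\le\beta\to0$ forces $u\to$ const in $\H^1$ hence $\nrm uq\to1$, giving $\nu(\beta)=\nrm{\nabla u}2^2+\beta\nrm uq^2\ge\beta(1+o(1))$.

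The asymptotics as $\beta\to+\infty$ is the main obstacle, and it is handled by the same two-sided strategy as Propositions~\ref{prop-upper}--\ref{prop-asympt}. For the upper bound, take $v$ optimal for $\mathsf K^*_{q,d}$, rescale $v_\beta(x):=v(\lambda_\beta x)$ with $\lambda_\beta$ chosen so that the gradient and $\L^q$ terms balance at the right power of $\beta$ (tracking that $\iRd{|\nabla v_\beta|^2}$ scales like $\lambda_\beta^{2-d}$ and $\iRd{|v_\beta|^q}$ like $\lambda_\beta^{-d}$, while the conformal weights $\big(\tfrac2{1+|x|^2}\big)^{\bullet}$ from the inverse stereographic projection tend to $1$ pointwise on the concentrating scale), pull it back to $\S^d$ via $u_\beta=\big(\tfrac{|x|^2+1}2\big)^{(d-2)/2}v_\beta$, and pass to the limit by dominated convergence to get $\limsup_{\beta\to\infty}\beta^{-\delta}\kappa_{q,d}^{-\delta}\nu(\beta)\le\mathsf K^*_{q,d}$; the exponent $\delta=2q/(2d-q(d-2))$ is exactly what makes the scaling consistent. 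For the matching lower bound I would argue by contradiction as in Proposition~\ref{prop-asympt}: a normalized sequence of minimizers with $\nu(\beta_n)$ below $(1-\eta)\mathsf K^*_{q,d}(\kappa_{q,d}\beta_n)^\delta$ must concentrate (the $\L^q$ mass, $q<2$, cannot spread out if the rescaled energy is too small); blowing up the concentrated pieces and using the Euclidean inequality \eqref{GNS2} on each, together with subadditivity of $\zeta\mapsto\zeta^{2/q}$ — wait, here one must be careful, since for $q<2$ one has $2/q>1$ so $\zeta\mapsto\zeta^{2/q}$ is \emph{superadditive}, which actually helps in the same direction as in the $q>2$ case — yields a contradiction. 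The delicate point throughout is getting the conformal-weight factors and the exponent bookkeeping exactly right; modulo that, the argument is parallel to Section~\ref{Sec:EstimatesMu(alpha)}. Collecting these pieces proves Lemma~\ref{Lem:q<2}. \finprf
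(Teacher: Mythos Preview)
Your proposal is correct and follows essentially the same approach as the paper's own proof: define $\nu(\beta)$ as an infimum, obtain a minimizer and the Euler--Lagrange equation, use the Bidaut-V\'eron--V\'eron rigidity \eqref{Ineq:Interpolation} together with the Taylor expansion along $1+\eps\varphi$ for the flat region when $q\in[1,2)$, a Poincar\'e-based argument for $\lim_{\beta\to0_+}\nu(\beta)/\beta=1$ when $q\in(0,1)$, and the concentration-compactness scheme of Propositions~\ref{prop-upper}--\ref{prop-asympt} for the asymptotics. Your write-up is in fact more explicit than the paper's, which mostly refers back to the $q>2$ case and leaves details to the reader; the only cosmetic slip is in the phrase ``the $\L^q$ mass cannot spread out'' --- here it is the $\L^2$ mass that concentrates while $\nrm{u_n}q\to0$, but your blow-up strategy is the right one regardless.
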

\begin{proof} Inequality~\eqref{InterpSphere2} is obtained by minimizing the l.h.s.~under the constraint $\nrm u2=1$: there is a minimizer which satisfies
\[\label{EL2}
-\,\Delta u+\beta\,u^{q-1}-\nu(\beta)\,u=0\,.
\]

\smallskip\noindent\emph{Case $q\in (1,2)$.} The proof is very similar to that of Lemma \ref{Lem:q>2}, so we leave it to the reader. Written for the optimal value of $\nu(\beta)$, inequality~\eqref{InterpSphere2} is optimal in the following sense:
\begin{enumerate}
\item[(i)] If $0<\beta\le d/(2-q)$, equality is achieved by constants. See \cite{DEKL} for rigidity results on $\S^d$.
\item[(ii)] If $\beta=d/(2-q)$, the sequence $\seq un$ with $u_n:=1+\frac 1n\,\varphi$ where $\varphi$ is an eigenfunction of the Laplace-Beltrami operator, is a minimizing sequence of the quotient to the l.h.s.~of~\eqref{InterpSphere2} divided by the r.h.s.~which converges to the optimal value of $\nu(\beta)=\beta=d/(2-q)$, that is,
\[
\lim_{n\to\infty}\frac{\nrm{\nabla u_n}2^2}{\nrm{u_n}2^2-\nrm{u_n}q^2}=\frac d{2-q}\,.
\]
\item[(iii)] If $\beta>d/(2-q)$, there exists a non-constant positive function $u\in\H^1(\S^d)\setminus\{0\}$ such that equality holds in~\eqref{InterpSphere2}.
\end{enumerate}

\smallskip\noindent\emph{Case $q\in (0,1]$.} In this case, since $\S^d$ is compact, the case $q\le 1$ does not differ from the case $q\in (1,2)$ as far as the existence of $\nu(\beta)$ is concerned. The only difference is that there is no known rigidity result for $q<1$. However we can prove that
\[
\lim_{\beta\to0_+}\frac{\nu(\beta)}\beta=1\,.
\]
Indeed, let us notice that $\nu(\beta)\le\beta$ (use constants as test functions). On the other hand, let $u_\beta=c_\beta+v_\beta$ be a minimizer for $\nu(\beta)$ such that $c_\beta=\iS{u_\beta}$ and, as a consequence, $\iS{v_\beta}=0$. Without l.o.g.~we can set $\iS{|c_\beta+v_\beta|^2}=c_\beta^2+\iS{|v_\beta|^2}=1$. Using the Poincar\'e inequality, we know that $\nrm{\nabla v_\beta}2^2\ge d\,\nrm{v_\beta}2^2$ and hence
\[
d\,\nrm{v_\beta}2^2+\beta\,\nrm{c_\beta+v_\beta}q^2\le\nrm{\nabla v_\beta}2^2+\beta\,\nrm{c_\beta+v_\beta}q^2=\nu(\beta)\le\beta
\]
which shows that $\lim_{\beta\to0_+}\nrm{v_\beta}2=0$ and $\lim_{\beta\to0_+}c_\beta=1$. As a consequence, $\nrm{c_\beta+v_\beta}q^2=c_\beta^2\,(1+o(1))$ as $\beta\to0_+$ and we obtain that
\[
\beta\,(1+o(1))=\beta\,c_\beta^2\,(1+o(1))\le\nu(\beta)\,,
\]
which concludes the proof.

\smallskip\noindent\emph{Asymptotic behavior of $\nu(\beta)$.} Finally, the asymptotic behavior of $\nu(\beta)$ when $\beta$ is large can be investigated using concentration-compactness methods similar to those used in the proofs of Propositions \ref{Cor:CriticalSubCritical}, \ref{prop-upper} and \ref{prop-asympt}. Details are left to the reader.
\end{proof}

\begin{proof}[Proof of Theorem~\ref{Thm2}] By H\"older's inequality we have
\[
\nrm uq^2=\(\iS{W^{-\frac q2}\,\(W\,|u|^2\)^\frac q2}\)^{2/q}\le\nrm{W^{-1}}{\frac q{2-q}}\,\,\iS{W\,|u|^2}\,.
\]
Using \eqref{InterpSphere2}, we get
\[
\iS{|\nabla u|^2}+\iS{W\,|u|^2}\ge\iS{|\nabla u|^2}+\nrm{W^{-1}}p^{-1}\,\nrm uq^2 \ge \nu\(\nrm{W^{-1}}p^{-1}\)\,\iS{|u|^2}
\]
with $p=q/(2-q)$, which proves~\eqref{T31}. Then Theorem~\ref{Thm2} is an easy consequence of Lemma~\ref{Lem:q<2}.
\end{proof}

\section{The threshold case: \texorpdfstring{$q=2$}{q=2}}\label{Sec:Conclusion}

The limiting case $q=2$ in the interpolation inequality~\eqref{Ineq:Interpolation} corresponds to the logarithmic Sobolev inequality
\[\label{Ineq:LogSob}
\iS{|u|^2\log\(\frac{|u|^2}{\nrm u2^2}\)}\le\frac2d\iS{|\nabla u|^2}\quad\forall\,u\in\H^1(\S^d,d\sigma)
\]
which has been studied, \emph{e.g.,}~in \cite{MR1230930, Brouttelande-03a, Brouttelande-03b}. For earlier results on the sphere, see \cite{federbush1969partially, Rothaus-81, MR674060} and references therein (in particular for the circle). Now, if we consider inequality~\eqref{InterpSphere}, in the limiting case $q=2$ we obtain the following interpolation inequality.
\begin{lem}\label{Lem:q=2} For any $p>\max\{1,d/2\}$, there exists a concave nondecreasing function $\xi:(0\,,+\infty)\to\R$ with the properties
\[
\xi(\alpha)=\alpha\quad\forall\;\alpha\in(0,\alpha_0)\quad\mbox{and}\quad\xi(\alpha)<\alpha\quad\forall\,\alpha>\alpha_0
\]
for some $\alpha_0\in\big[\frac d2\,(p-1),\frac d2\,p\big]$, and
\[
\xi(\alpha)\sim\alpha^{1-\frac d{2\,p}}\quad\mbox{as}\quad\alpha\to+\infty
\]
such that
\be{InterpLogSob}
\iS{|u|^2\log\(\frac{|u|^2}{\nrm u2^2}\)}+p\,\log\big(\tfrac{\xi(\alpha)}\alpha\big)\,\nrm u2^2\le p\,\nrm u2^2\,\log\(1+\frac{\nrm{\nabla u}2^2}{\alpha\,\nrm{ u}2^2}\)\quad\forall\,u\in\H^1(\S^d)\,.
\ee
\end{lem}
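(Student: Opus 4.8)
The plan is to recognize that \eqref{InterpLogSob} is, after a harmless normalization, exactly the subcritical interpolation inequality of Lemma~\ref{Lem:q>2} used at one well-chosen exponent, composed with an elementary Jensen-type inequality that manufactures the entropy term. In particular $\xi$ can simply be taken to be the function $\mu$ of that lemma, so no limiting procedure in $q$ is actually needed — the ``$q=2$'' of the section heading only reflects the shape of the left-hand side.

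\emph{Choice of exponent and of $\xi$.} I would fix $p>\max\{1,d/2\}$ and set $q:=\frac{2p}{p-1}$, so that $\frac1p+\frac2q=1$ and $q-2=\frac2{p-1}$. The condition $p>1$ is equivalent to $q>2$, and $p>d/2$ is equivalent to $q<2^*$ (with $2^*=+\infty$ when $d=1,2$); hence Lemma~\ref{Lem:q>2} applies and produces a concave increasing function $\mu:\R^+\to\R^+$ with $\mu(\alpha)=\alpha$ on $\big[0,\frac d{q-2}\big]$, $\mu(\alpha)<\alpha$ for $\alpha>\frac d{q-2}$, and $\mu(\alpha)=\frac{\mathsf K_{q,d}}{\kappa_{q,d}}\,\alpha^{1-\vartheta}(1+o(1))$ as $\alpha\to+\infty$ with $\vartheta=\frac{d(q-2)}{2q}$. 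I then set $\xi:=\mu$. Since $\frac d{q-2}=\frac d2(p-1)$, the function $\xi$ is concave nondecreasing, equals $\alpha$ on $\big(0,\frac d2(p-1)\big]$ and is strictly smaller afterwards, so one may take $\alpha_0=\frac d2(p-1)\in\big[\frac d2(p-1),\frac d2p\big]$; and since $\vartheta=\frac{d(q-2)}{2q}=\frac d{2p}$, one gets $\xi(\alpha)=\frac{\mathsf K_{q,d}}{\kappa_{q,d}}\,\alpha^{1-d/(2p)}(1+o(1))$, which is the stated asymptotics (with the explicit constant). All the structural claims on $\xi$ are thus inherited from Lemma~\ref{Lem:q>2}.

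\emph{Proof of the inequality.} First note that \eqref{InterpLogSob} is invariant under $u\mapsto\lambda u$, $\lambda>0$, so it suffices to treat $u\in\H^1(\S^d)$ with $\nrm u2=1$. For such $u$, Lemma~\ref{Lem:q>2} applied to this $q$ gives $\nrm{\nabla u}2^2+\alpha\ge\mu(\alpha)\,\nrm uq^2$. To bring in the entropy I would apply Jensen's inequality to the probability measure $|u|^2\,d\sigma$ and the convex function $\exp$, using $\frac q2-1=\frac1{p-1}$:
\[
\nrm uq^2=\Big(\iS{|u|^2\,e^{\frac1{p-1}\log|u|^2}}\Big)^{2/q}\ge\exp\Big(\tfrac{2}{q(p-1)}\iS{|u|^2\log|u|^2}\Big)=\exp\Big(\tfrac1p\iS{|u|^2\log|u|^2}\Big),
\]
where the last equality uses $\frac2{q(p-1)}=\frac1p$. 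Combining with the previous estimate and $\xi=\mu$,
\[
\nrm{\nabla u}2^2+\alpha\ge\xi(\alpha)\,\exp\Big(\tfrac1p\iS{|u|^2\log|u|^2}\Big);
\]
taking logarithms, multiplying by $p$, and writing $\log(\nrm{\nabla u}2^2+\alpha)-\log\xi(\alpha)=\log\big(1+\frac{\nrm{\nabla u}2^2}\alpha\big)-\log\frac{\xi(\alpha)}\alpha$ yields exactly \eqref{InterpLogSob} in the normalized case. Undoing the normalization finishes the argument.

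\emph{Main obstacle.} There is no genuine analytic difficulty beyond Lemma~\ref{Lem:q>2}, which does all the work; the rest is bookkeeping. The points needing care are: (i) the identity $q=\frac{2p}{p-1}$, which is exactly what collapses the Jensen/H\"older exponents to $\tfrac1p$ and which forces the hypothesis $p>\max\{1,d/2\}$; (ii) checking the scale invariance that licenses the normalization $\nrm u2=1$; and (iii) verifying that the threshold $\frac d{q-2}$ and the growth rate $1-\vartheta$ of $\mu$ translate into the interval $\big[\frac d2(p-1),\frac d2p\big]$ for $\alpha_0$ and into the exponent $1-d/(2p)$ in the asymptotics. One could further ask whether the \emph{optimal} $\xi$ in \eqref{InterpLogSob} has a strictly larger threshold — which is presumably why the statement allows $\alpha_0$ to range over an interval rather than pinning it at $\frac d2(p-1)$ — but for the existence claim the choice $\xi=\mu$ is enough.
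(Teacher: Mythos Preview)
Your argument is correct and follows essentially the same route as the paper: derive the logarithmic H\"older inequality $\iS{|u|^2\log(|u|^2/\nrm u2^2)}\le p\,\nrm u2^2\log(\nrm uq^2/\nrm u2^2)$ and then feed in Lemma~\ref{Lem:q>2}. The paper obtains the logarithmic H\"older inequality by differentiating H\"older's inequality $\nrm ur\le\nrm u2^\theta\nrm uq^{1-\theta}$ at $r=2$, which is equivalent to your Jensen step for $\exp$ against the probability measure $|u|^2\,d\sigma$. The one substantive difference is that the paper defines $\xi(\alpha)$ as the \emph{optimal} constant in \eqref{InterpLogSob} and then locates $\alpha_0$ by showing $\xi\ge\mu$ (hence $\alpha_0\ge\frac d2(p-1)$) together with $\xi(\alpha)<\alpha$ for $\alpha>\frac d2\,p$ via the test function $1+\eps\varphi$; you instead take the shortcut $\xi:=\mu$, which, as you correctly note, suffices for the existence statement and simply pins $\alpha_0$ at the left endpoint of the allowed interval.
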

\begin{proof} Consider H\"older's inequality: $\nrm ur\le\nrm u2^\theta\,\nrm uq^{1-\theta}$, with $2\le r<q$ and $\theta=\frac 2r\,\frac{q-r}{q-2}$. To emphasize the dependence of $\theta$ in $r$, we shall write $\theta=\theta(r)$. By taking the logarithm of both sides of the inequality, we find that
\[
\frac 1r\log \iS{|u|^r} \le\frac{\theta(r)}2\,\log \iS{|u|^2} +\frac{1-\theta(r)}q\,\log\iS{|u|^q}\,.
\]
The inequality becomes an equality when $r=2$, so that we may differentiate at $r=2$ and get, with $q=\frac{2\,p}{p-1}<2^*$, \emph{i.e.}~$p=\frac q{q-2}$, the \emph{logarithmic H\"older} inequality
\[\label{Ineq:logHoelder}
\iS{|u|^2\log\(\frac{|u|^2}{\nrm u2^2}\)}\le p\,\nrm u2^2\,\log\(\frac{\nrm uq^2}{\nrm u2^2}\)\quad\forall\,u\in\H^1(\S^d)\,.
\]
We may now use inequality~\eqref{InterpSphere} to estimate
\[
\frac{\nrm uq^2}{\nrm u2^2}\le\frac\alpha{\mu(\alpha)}\,\(1+\frac 1\alpha\,\frac{\nrm{\nabla u}2^2}{\nrm u2^2}\)
\]
where $\mu=\mu(\alpha)$ is the constant which appears in Lemma~\ref{Lem:q>2}. Thus we get
\[\label{Ineq:logHoelderbis}
\iS{|u|^2\log\(\frac{|u|^2}{\nrm u2^2}\)}+p\,\log\big(\tfrac{\mu(\alpha)}\alpha\big)\,\nrm u2^2\,\le p\,\nrm u2^2\,\log\(1+\frac{\nrm{\nabla u}2^2}{\alpha\,\nrm u2^2}\)
\,,
\]
which proves that the inequality
\[
\iS{|u|^2\log\(\frac{|u|^2}{\nrm u2^2}\)}+p\,\log\xi(\alpha)\,\nrm u2^2\le p\,\nrm u2^2\,\log\(\alpha+\frac{\nrm{\nabla u}2^2}{\nrm{ u}2^2}\)
\]
holds for some optimal constant $\xi(\alpha)\ge\mu(\alpha)$, which is therefore concave and such that $\lim_{\alpha\to+\infty}\xi(\alpha)=+\infty$. This establishes \eqref{InterpLogSob}. The fact that equality is achieved for every $\alpha>0$ follows from the method of \cite[Proposition 3.3]{1005}.

Testing \eqref{InterpLogSob} with constant functions, we find that $\xi(\alpha)\le\alpha$ for any $\alpha>0$. On the other hand, $\xi(\alpha)\ge\mu(\alpha)=\alpha$ for any $\alpha\le\frac d{q-2}=\frac d2\,(p-1)$. Testing \eqref{InterpLogSob} with $u=1+\eps\,\varphi$, we find that $\xi(\alpha)<\alpha$ if $\alpha>\frac d2\,p$.

By Proposition \ref{prop-asympt}, we know that $\xi(\alpha)\ge\mu(\alpha)\sim\alpha^{1-\vartheta}$ with $\vartheta=d\,\frac{q-2}{2\,q}=\frac d{2\,p}$ as $\alpha\to+\infty$. As in the proof of Propositions \ref{prop-upper} and \ref{prop-asympt}, let us consider an optimal function $u_\alpha$ for \eqref{InterpLogSob}. Then we have
\[
p\,\log\big(\tfrac{\xi(\alpha)}\alpha\big)=p\,\log\(1+\frac1\alpha\,\nrm{\nabla u_\alpha}2^2\)-\iS{|u_\alpha|^2\log|u_\alpha|^2}\sim\frac p\alpha\,\nrm{\nabla u_\alpha}2^2-\iS{|u_\alpha|^2\log|u_\alpha|^2}
\]
as $\alpha\to+\infty$ and $u_\alpha$ concentrates at a single point like in the case $q>2$ so that, after a stereographic projection which transforms $u_\alpha$ into $v_\alpha$, the function $v_\alpha$ is, up to higher order terms, optimal for the Euclidean logarithmic Sobolev inequality
\[
\iRd{|v|^2\log\(\frac{|v|^2}{\nrmRd v2^2}\)}+\frac d2\,\log(\pi\,\eps\,e^2)\,\nrmRd v2^2\le\eps\,\nrmRd{\nabla v}2^2
\]
which holds for any $\eps>0$ and any $v\in\H^1(\R^d)$. Here we have of course $\eps=p/\alpha$ and find that
\[
p\,\log\big(\tfrac{\xi(\alpha)}\alpha\big)=\tfrac d2\,\log\big(\pi\,\tfrac p\alpha\,e^2\big)\,(1+o(1))\quad\mbox{as}\quad\alpha\to+\infty\,,
\]
which concludes the proof.
\end{proof}

\begin{cor}\label{Cor:LinearizedLogSob} With the notations of Lemma~\ref{Lem:q=2}, for any $\alpha>0$ we have
\[
\frac\alpha p\,\iS{|u|^2\log\(\frac{|u|^2}{\nrm u2^2}\)}+\alpha\,\log\big(\tfrac{\xi(\alpha)}\alpha\big)\,\nrm u2^2\le\nrm{\nabla u}2^2\quad\forall\,u\in\H^1(\S^d)\,.
\]
\end{cor}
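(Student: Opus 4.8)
The plan is to derive Corollary~\ref{Cor:LinearizedLogSob} directly from inequality~\eqref{InterpLogSob} of Lemma~\ref{Lem:q=2} by a simple convexity/concavity argument applied to the logarithm. The starting point is the inequality
\[
\iS{|u|^2\log\(\frac{|u|^2}{\nrm u2^2}\)}+p\,\log\big(\tfrac{\xi(\alpha)}\alpha\big)\,\nrm u2^2\le p\,\nrm u2^2\,\log\(1+\frac{\nrm{\nabla u}2^2}{\alpha\,\nrm{ u}2^2}\)\,,
\]
valid for every $\alpha>0$ and every $u\in\H^1(\S^d)$. The right-hand side still contains a logarithm of the Dirichlet quotient, whereas the corollary asks for a linear (in $\nrm{\nabla u}2^2$) bound. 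So the key step is to replace $\log(1+t)$ by $t$ using the elementary inequality $\log(1+t)\le t$ valid for all $t\ge 0$, with $t=\nrm{\nabla u}2^2/(\alpha\,\nrm u2^2)$.

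Concretely, first I would apply $\log(1+t)\le t$ to obtain
\[
p\,\nrm u2^2\,\log\(1+\frac{\nrm{\nabla u}2^2}{\alpha\,\nrm{ u}2^2}\)\le p\,\nrm u2^2\cdot\frac{\nrm{\nabla u}2^2}{\alpha\,\nrm u2^2}=\frac p\alpha\,\nrm{\nabla u}2^2\,.
\]
Combining with~\eqref{InterpLogSob} gives
\[
\iS{|u|^2\log\(\frac{|u|^2}{\nrm u2^2}\)}+p\,\log\big(\tfrac{\xi(\alpha)}\alpha\big)\,\nrm u2^2\le\frac p\alpha\,\nrm{\nabla u}2^2\,,
\]
and multiplying through by $\alpha/p>0$ yields exactly the claimed estimate. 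One small point to watch is the degenerate case $u\equiv\text{const}$ (so $\nrm{\nabla u}2=0$): then both the entropy term and the gradient term vanish, and the inequality reduces to $\alpha\,\log(\xi(\alpha)/\alpha)\,\nrm u2^2\le 0$, which holds since $\xi(\alpha)\le\alpha$ by Lemma~\ref{Lem:q=2}; so nothing goes wrong there.

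There is really no serious obstacle here: the whole content of the corollary has already been built into Lemma~\ref{Lem:q=2}, and the only extra ingredient is the tangent-line bound $\log(1+t)\le t$. The one thing worth remarking is that this linearized form is in some sense weaker than~\eqref{InterpLogSob} — it is the ``infinitesimal'' version obtained by pushing the Dirichlet energy to zero in the logarithm — but it is the form most directly comparable with the Euclidean logarithmic Sobolev inequality and with the spectral estimates for $-\Delta+W$ discussed in the preceding sections. If one wanted the sharp constant in this linearized inequality, one could instead optimize over $\alpha$, but for the stated form (with $\alpha$ as a free parameter) the argument above suffices.
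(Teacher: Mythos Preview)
Your proof is correct and follows exactly the same route as the paper: apply $\log(1+t)\le t$ to the right-hand side of~\eqref{InterpLogSob} and multiply by $\alpha/p$. The additional check for constant $u$ is harmless but not needed, since the argument already covers it.
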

\begin{proof} This is a straightforward consequence of Lemma~\ref{Lem:q=2} using the fact that $\log(1+x)\le x$ for any $x>0$.\end{proof}

As in the case $q\neq2$, Corollary~\ref{Cor:LinearizedLogSob} provides some spectral estimates. Let $u\in\H^1(\S^d)$ be such that $\nrm u2=1$. A straightforward optimization with respect to an arbitrary function $W$ shows that
\[
\inf_W\left[\iS{W\,|u|^2}+\mu\,\log\(\iS{e^{-W/\mu}}\)\right]=-\,\mu\iS{|u|^2\,\log|u|^2}\,,
\]
with optimality case achieved by $W$ such that
\[
|u|^2=\frac{e^{-W/\mu}}{\iS{e^{-W/\mu}}}\,.
\]
Notice that, up to the addition of a constant, we can always assume that $\iS{e^{-W/\mu}}=1$, which uniquely determines the optimal~$W$. Now, by Corollary~\ref{Cor:LinearizedLogSob} applied with $\mu=\alpha/p$, we find that
\[
\iS{|\nabla u|^2}+\iS{W\,|u|^2}\ge\alpha\,\log\big(\tfrac{\xi(\alpha)}\alpha\big)-\frac\alpha p\,\log\(\iS{e^{-\,p\,W/\alpha}}\)\,.
\]
This leads us to the following statement.
\begin{cor}\label{Cor:q=2} Let $d\ge1$. With the notations of Lemma~\ref{Lem:q=2}, we have the following estimate
\[
e^{-\,\lambda_1(-\Delta-W)/\alpha}\le\frac\alpha{\xi(\alpha)}\,\(\iS{e^{-\,p\,W/\alpha}}\)^{1/p}
\]
for any function $W$ such that $e^{-\,p\,W/\alpha}$ is integrable. This estimate is optimal in the sense that there exists a nonnegative function $W$ for which the inequality becomes an equality.\end{cor}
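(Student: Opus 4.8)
The plan is to combine the variational characterization of $\lambda_1(-\Delta-W)$ with Corollary~\ref{Cor:LinearizedLogSob}, exactly in the spirit of the passage from interpolation inequalities to Keller-Lieb-Thirring estimates used for $q\neq 2$ in Sections~\ref{Sec:Spectral} and~\ref{Sec:Confining}. First I would recall that for $u\in\H^1(\S^d)$ with $\nrm u2=1$,
\[
\lambda_1(-\Delta-W)\le\iS{|\nabla u|^2}-\iS{W\,|u|^2}\,,
\]
so that $\iS{|\nabla u|^2}+\iS{W\,|u|^2}\ge\lambda_1(-\Delta-W)+2\iS{W\,|u|^2}$ is \emph{not} quite what we want; instead the cleaner route is to choose $u$ to be the normalized first eigenfunction of $-\Delta-W$, so that $\iS{|\nabla u|^2}-\iS{W\,|u|^2}=\lambda_1(-\Delta-W)$ with $\nrm u2=1$. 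Then one rewrites $\iS{|\nabla u|^2}=\lambda_1(-\Delta-W)+\iS{W\,|u|^2}$ and applies Corollary~\ref{Cor:LinearizedLogSob} (with $\mu=\alpha/p$) to the left-hand side: this gives, after rearranging,
\[
\lambda_1(-\Delta-W)\ge\alpha\,\log\big(\tfrac{\xi(\alpha)}\alpha\big)+\frac\alpha p\iS{|u|^2\log|u|^2}-\iS{W\,|u|^2}\,.
\]
Now I would invoke the elementary duality identity already displayed in the excerpt,
\[
\frac\alpha p\iS{|u|^2\log|u|^2}-\iS{W\,|u|^2}\ge-\frac\alpha p\,\log\Big(\iS{e^{-pW/\alpha}}\Big)\,,
\]
which holds for \emph{every} $u$ with $\nrm u2=1$ (it is the Legendre-type inequality $\int f\log f-\int Wf\ge-\log\int e^{-W}$ applied to $f=|u|^2$ and the potential $pW/\alpha$, i.e.\ the Gibbs variational principle). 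Combining the last two displays yields
\[
\lambda_1(-\Delta-W)\ge\alpha\,\log\big(\tfrac{\xi(\alpha)}\alpha\big)-\frac\alpha p\,\log\Big(\iS{e^{-pW/\alpha}}\Big)\,,
\]
which is precisely the claimed inequality after dividing by $-\alpha$ and exponentiating:
\[
e^{-\lambda_1(-\Delta-W)/\alpha}\le\frac\alpha{\xi(\alpha)}\,\Big(\iS{e^{-pW/\alpha}}\Big)^{1/p}\,.
\]

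For optimality I would reverse the chain. Fix $\alpha>0$ and let $u_\alpha$ be an optimal function for inequality~\eqref{InterpLogSob} in Lemma~\ref{Lem:q=2}, normalized so that $\nrm{u_\alpha}2=1$; such an optimizer exists by the remark in the proof of Lemma~\ref{Lem:q=2} (``equality is achieved for every $\alpha>0$''). Define $W:=-\frac\alpha p\,\log|u_\alpha|^2$ up to an additive constant fixed by requiring $\iS{e^{-pW/\alpha}}=\iS{|u_\alpha|^2}=1$, so that $|u_\alpha|^2=e^{-pW/\alpha}$ and the Gibbs inequality above becomes an equality. With this choice $u_\alpha$ satisfies the Euler-Lagrange equation for \eqref{InterpLogSob}, which after the substitution is exactly $-\Delta u_\alpha-W u_\alpha=\lambda\,u_\alpha$ for the corresponding Lagrange multiplier $\lambda$; tracking constants shows $\lambda=\lambda_1(-\Delta-W)=\alpha\log(\xi(\alpha)/\alpha)$, so that $u_\alpha$ is a ground state and every inequality in the chain is saturated. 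This is the only delicate point: one must check that the Euler-Lagrange equation of the optimizer of \eqref{InterpLogSob}, after the change of unknown $W\leftrightarrow u$, really is an eigenvalue equation with eigenvalue equal to $\alpha\log(\xi(\alpha)/\alpha)$, and that $u_\alpha$ (being nonnegative and, by elliptic regularity, positive and smooth) is genuinely the \emph{first} eigenfunction. The main obstacle is thus not the inequality itself — which is a mechanical combination of Corollary~\ref{Cor:LinearizedLogSob} and the Gibbs principle — but this identification of the equality case, where one needs the existence of optimizers for \eqref{InterpLogSob} and a careful bookkeeping of the additive constant in $W$ and of the Lagrange multiplier.
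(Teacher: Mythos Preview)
Your overall strategy is exactly the paper's: combine Corollary~\ref{Cor:LinearizedLogSob} with the Gibbs variational principle, and for optimality pick $W$ so that $|u_\alpha|^2=e^{-pW/\alpha}$ where $u_\alpha$ is an optimizer of~\eqref{InterpLogSob}. One minor difference: the paper works with an \emph{arbitrary} $u\in\H^1(\S^d)$ with $\nrm u2=1$, obtains the inequality for all such $u$, and then passes to the infimum; you instead fix $u$ to be the ground state from the start. Both are fine, but the paper's route avoids invoking existence and regularity of the first eigenfunction.

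There is, however, a genuine sign error in your Gibbs step. The Legendre/Gibbs inequality reads
\[
\int f\log f+\int Vf\ge-\log\int e^{-V}
\]
(with a \emph{plus} sign on $\int Vf$, as follows from $\int f\log(f/g)\ge0$ with $g=e^{-V}/\int e^{-V}$), not $\int f\log f-\int Vf\ge-\log\int e^{-V}$ as you wrote. With the correct sign, your chain gives
\[
\lambda_1(-\Delta-W)\ge\alpha\log\big(\tfrac{\xi(\alpha)}\alpha\big)-\tfrac\alpha p\,\log\Big(\iS{e^{+pW/\alpha}}\Big)\,,
\]
or equivalently a lower bound on $\lambda_1(-\Delta+W)$ with $e^{-pW/\alpha}$ on the right. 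This is precisely what the paper's own derivation produces just before the corollary: it ends with $\iS{|\nabla u|^2}+\iS{W\,|u|^2}\ge\cdots$, i.e.\ the operator $-\Delta+W$. The printed statement of the corollary thus appears to carry a sign typo (replace $-\Delta-W$ by $-\Delta+W$, or else $e^{-pW/\alpha}$ by $e^{+pW/\alpha}$), and your wrong Gibbs sign has simply reproduced it. A quick sanity check confirms this: for $W\equiv c>0$ constant the stated inequality would read $e^{c/\alpha}\le\tfrac\alpha{\xi(\alpha)}\,e^{-c/\alpha}$, which fails for large~$c$.
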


\appendix\section{Further estimates and numerical results}\label{Sec:AppendixA}

\subsection{A refined upper estimate}\label{Sec:RefinedUpper}

Let $q\in(2,2^*)$. For $\alpha>d/(q-2)$, we can give an upper estimate of the optimal constant $\mu(\alpha)$ in inequality~\eqref{InterpSphere} of Lemma~\ref{Lem:q>2}. Consider functions which depend only on $z$, with the notations of Section~\ref{Sec:PropertiesAlpha(mu)}. Then \eqref{InterpSphere} is equivalent to an inequality that can be written as
\[
\mathsf F_\alpha[f]:=\frac{\int_{-1}^1|f'|^2\;\nu\;d\nu_d+\alpha\int_{-1}^1|f|^2\;d\nu_d}{\(\int_{-1}^1|f|^q\;d\nu_d\)^{2/q}}\ge\mu(\alpha)
\]
where $d\nu_d$ is the probability measure defined by
\[
\nu_d(z)\,dz=d\nu_d(z):=Z_d^{-1}\,\nu^{\frac d2-1}\,dz\quad\mbox{with}\quad\nu(z):=1-z^2\;,\quad Z_d:=\sqrt\pi\,\frac{\Gamma(\tfrac d2)}{\Gamma(\tfrac{d+1}2)}\,.
\]
See \cite{DEKL} for details. To get an estimate, it is enough to take a well chosen test function: consider $f_\eps(z):=1+\eps\,\varphi(z)$ and as in Section~\ref{Sec:PropertiesAlpha(mu)} we can choose $\varphi(z)=z$. Then one can optimize $h_\alpha(\eps)=\mathsf F_\alpha[f_\eps]$ with respect to $\eps\in(0,1)$, and observe that $\int_{-1}^1|f_\eps'|^2\;\nu\;d\nu_d=d\,\eps^2\int_{-1}^1z^2\;d\nu_d$, so that
$h_\alpha(\eps)$ can be written as
\[
h_\alpha(\eps)=\frac{\alpha+(d+\alpha)\,\eps^2\int_{-1}^1z^2\;d\nu_d}{\(\int_{-1}^1|1+\eps\,z|^q\;d\nu_d\)^{2/q}}\ge\mu(\alpha)\,.
\]
When $\eps\to0_+$, we recover that $h_\alpha(\eps)-\alpha\sim\big[d-\alpha\,(q-2)\big]\,\eps^2\int_{-1}^1z^2\;d\nu_d<0$ if $\alpha>d/(q-2)$, but a better estimate can be achieved simply by considering $\mu_+(\alpha):=\inf_{\eps\in(0,1)}h_\alpha(\eps)$ so that $\mu(\alpha)\le\mu_+(\alpha)<\alpha$. The function $\alpha\mapsto\mu_+(\alpha)$ can be computed explicitly (using hypergeometric functions) and is shown in Fig.~1.

\subsection{Numerical results}

In this section, we illustrate the various estimates obtained in this paper by numerical computations done in the special case $d=3$ and $q=3$. See Fig.~1 for the computation of the curve $\alpha\mapsto\mu(\alpha)$ and how it behaves compared to the theoretical estimates obtained in this paper. We emphasize that our upper and lower estimates $\alpha\mapsto\mu_\pm(\alpha)$ bifurcate from the line $\mu=\alpha$ precisely at $\alpha=d/(q-2)$ if $q\in(2,2^*)$ (and at $\alpha=d/(2-q)$ if $q\in(1,2)$). The curve corresponding to the asymptotic regime is also plotted, but gives relevant information only as $\alpha\to\infty$.
\begin{figure}[hb]
\includegraphics[width=9cm]{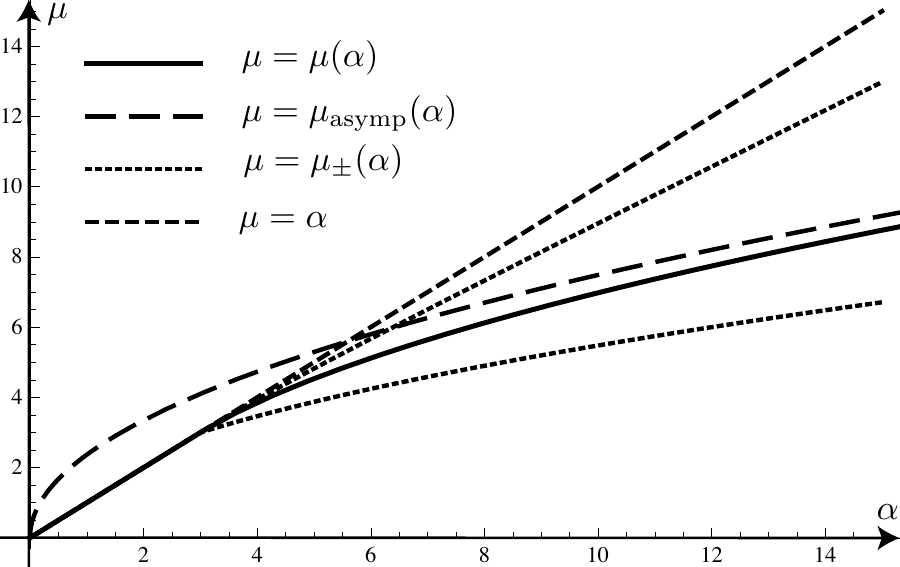}
\caption{\it In the case $q>2$, the optimal constant is given by $\mu=\alpha$ for $\alpha\le d/(q-2)$ and the curve $\mu=\mu(\alpha)$ for $\alpha>d/(q-2)$. An upper estimate is given by the curve $\mu=\mu_+(\alpha)$ obtained by optimizing the function $h_\alpha(\eps)$ in terms of $\eps\in(0,1)$ while a lower estimate, namely $\mu=\mu_-(\alpha)=\alpha_*^\vartheta\,\alpha^{1-\vartheta}$ has been established in Proposition~\ref{Cor:CriticalSubCritical}. The asymptotic regime is governed by $\mu(\alpha)\sim\mu_{\rm asymp}(\alpha)=\mathsf K_{q,d}\,\kappa_{q,d}^{-1}\,\alpha^{1-\vartheta}$ as $\alpha\to+\infty$ according to Lemma~\ref{Lem:q>2}. The above plot shows the various curves in the special case $d=3$ and $q=3$.}
\end{figure}

The convergence towards the asymptotic regime is illustrated in Fig.~2 which shows the convergence of $\mu(\alpha)/\mu_{\rm asymp}(\alpha)$ towards $1$ as $\alpha\to+\infty$ in the special case $d=3$ and $q=3$. In terms of spectral properties, for large potentials, eigenvalues of the Schr\"odinger operator can be estimated according to Theorem~\ref{Thm1} by the Euclidean Keller-Lieb-Thirring constant that has been numerically computed for instance in~\cite[Appendix~A. Numerical studies, by J.F.~Barnes]{Lieb-Thirring76}.

\begin{figure}[ht]
\includegraphics[width=9cm]{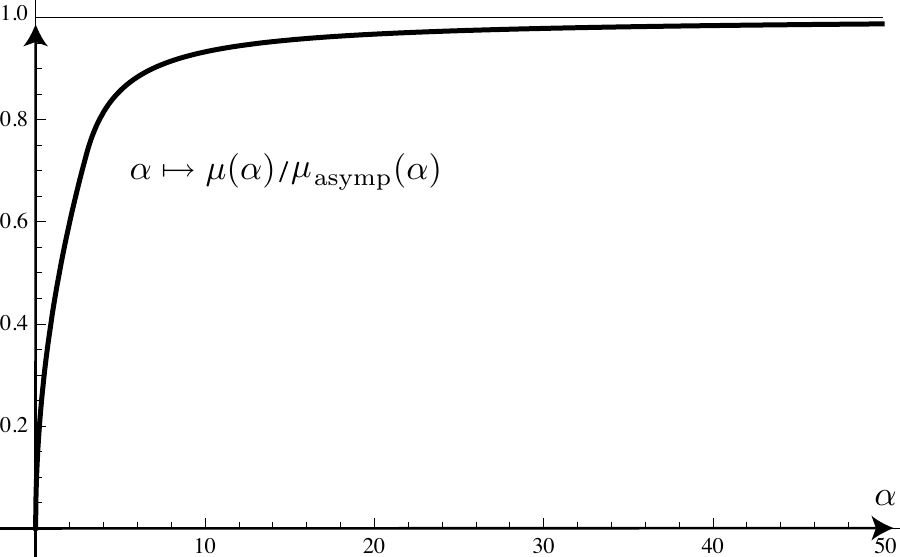}
\caption{\it The asymptotic regime corresponding to $\alpha\to+\infty$ has the interesting feature that, up to a dependence in $\alpha^{1-\vartheta}$ and a normalization factor proportional to $\kappa_{q,d}$, the optimal constant $\mu(\alpha)$ behaves like the optimal constant in the Euclidean space, as has been established in Proposition~\ref{prop-asympt}.}
\end{figure}

\newpage\section{Constants on the Euclidean space}\label{Appendix}

\subsection{Scaling of the Gagliardo-Nirenberg-Sobolev inequality}\label{Sec:GNS}

Let $q>2$ and denote by $\mathsf K_{\rm GN}(q)$ the optimal constant in the Gagliardo-Nirenberg-Sobolev inequality, given by
\[
\mathsf K_{\rm GN}(q):=\inf_{u\in\H^1(\R^d)\setminus\{0\}}\frac{\nrmRd{\nabla u}2^{2\,\vartheta}\,\nrmRd u2^{2\,(1-\,\vartheta)}}{\nrmRd uq^2}\quad\mbox{with}\quad\vartheta=\vartheta(q,d)=d\,\frac{q-2}{2\,q}\,.
\]
An optimization of the quotient in the definition of $\mathsf K_{q,d}$, which has been defined in Section~\ref{Sec:Interpolationq>2}, allows to relate this constant with $\mathsf K_{\rm GN}(q)$. Indeed, if we optimize $\mathcal N[u]:=\iRd{|\nabla u|^2}+\iRd{|u|^2}$ under the scaling $\lambda\mapsto u_\lambda(x):=\lambda^{d/q}\,u(\lambda\,x)$, then we find that
\[
\mathcal N[u_\lambda]=\lambda^{2\,(1-\,\vartheta)}\iRd{|\nabla u|^2}+\lambda^{-\,2\,\vartheta}\iRd{|u|^2}
\]
achieves its minimum at
\[
\lambda_\star=\sqrt{\frac\vartheta{1-\,\vartheta}}\,\frac{\nrmRd u2}{\nrmRd{\nabla u}2}\,,
\]
so that
\[
\mathcal N[u_{\lambda_\star}]=\vartheta^{-\,\vartheta}\,(1-\,\vartheta)^{-\,(1-\,\vartheta)}\,\nrmRd{\nabla u}2^{2\,\vartheta}\,\nrmRd u2^{2\,(1-\,\vartheta)}\,,
\]
thus proving that $\mathsf K_{q,d}$ can be computed in terms of $\mathsf K_{\rm GN}(q)$ as
\[
\mathsf K_{q,d}=\vartheta^{-\,\vartheta}\,(1-\,\vartheta)^{-\,(1-\,\vartheta)}\,\mathsf K_{\rm GN}(q)\,.
\]

\subsection{Asymptotic regimes in Gagliardo-Nirenberg-Sobolev inequalities}\label{Sec:AsympGNS}

Let $q>2$ and consider the constant $\mathsf K_{q,d}$ as above. To handle the case of dimension $d=1$, we may observe that for any smooth compactly supported function $u$ on $\R$, we can write either
\[
|u(x)|^2=2\,\left|\,\int_{-\infty}^xu(y)\,u'(y)\;dy\;\right|\le\|u\|_{\L^2(-\infty,x)}^2+\|u'\|_{\L^2(-\infty,x)}^2\quad\forall\,x\in\R
\]
or
\[
|u(x)|^2=2\,\left|\,\int_x^{+\infty}u(y)\,u'(y)\;dy\;\right|\le\|u\|_{\L^2(x,+\infty)}^2+\|u'\|_{\L^2(x,+\infty)}^2\quad\forall\,x\in\R
\]
thus proving that
\[
|u(x)|^2\le\frac 12\(\|u\|_{\L^2(\R)}^2+\|u'\|_{\L^2(\R)}^2\)\quad\forall\,x\in\R\,,
\]
that is, the Agmon inequality
\[
\frac{\|u\|_{\L^2(\R)}^2+\|u'\|_{\L^2(\R)}^2}{\|u\|_{\L^\infty(\R)}^2}\ge2\,,
\]
and hence $\mathsf K_{\infty,1}\ge2$. Equality is achieved by the function $u(x)=e^{-|x|}$, $x\in\R$, and we have shown that
\[
\mathsf K_{\infty,1}=2\,.
\]
\begin{prop}\label{Prop:limitsK}
Assume that $q>2$. For all $d\geq 1$,
\[
\lim_{q\to 2_+}\mathsf K_{q,d}=1
\]
and, for all $d\geq 3$,
\[
\lim_{q\to2^*}\mathsf K_{q,d}=\mathsf S_d
\]
where $\mathsf S_d$ is the best constant in inequality~\eqref{Ineq:Sobolev}. If $d=1$, then $\lim_{q\to +\infty}\mathsf K_{q,1}=\mathsf K_{\infty,1}$.\end{prop}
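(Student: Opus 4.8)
\textbf{Proof plan for Proposition~\ref{Prop:limitsK}.}

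The plan is to treat the three limits separately, in each case comparing optimizers or using test functions to sandwich $\mathsf K_{q,d}$ between the target constant and itself. For the limit $q\to2_+$, I would argue that $\mathsf K_{q,d}\le1$ always (take any fixed test function $v$ and use that $\nrmRd vq^2\to\nrmRd v2^2$ as $q\to2_+$ by dominated convergence, so the quotient defining $\mathsf K_{q,d}$ is at most $(\nrmRd{\nabla v}2^2+\nrmRd v2^2)/\nrmRd v2^2$; actually, cleaner: on a ball of finite measure $\nrmRd v{q}\le|{\rm supp}\,v|^{1/q-1/2}\nrmRd v2\to\nrmRd v2$). For the lower bound, I would use that by interpolation $\nrmRd vq\le\nrmRd v2^{1-\vartheta}\nrmRd v{q_0}^{\vartheta}$ for a fixed $q_0\in(2,2^*)$ and $\vartheta=\vartheta(q)\to0$ as $q\to2_+$, together with the finiteness of $\mathsf K_{q_0,d}$, to show $\liminf_{q\to2_+}\mathsf K_{q,d}\ge1$. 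Combining the two gives the limit $1$.

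For the Sobolev limit $q\to2^*$ when $d\ge3$: the upper bound $\limsup_{q\to2^*}\mathsf K_{q,d}\le\mathsf S_d$ follows by plugging the Aubin--Talenti functions $v_\eps(x)=(\eps/(\eps^2+|x|^2))^{(d-2)/2}$ into the quotient and letting $q\to2^*$ and then $\eps\to0$ (the $\nrmRd{v_\eps}2^2$ term can be made negligible by the same scaling computation as in Proposition~\ref{Prop:Critical}, since $v_\eps$ has fixed $\L^{2^*}$ norm while $\nrmRd{v_\eps}2^2$ and $\nrmRd{\nabla v_\eps}2^2$ scale so that the lower-order term vanishes); this recovers exactly $\mathsf S_d$ in the limit. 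For the lower bound, for any $v\in\H^1(\R^d)$ one has $\nrmRd vq^2\le C(v)$ times a combination of $\nrmRd{\nabla v}2^2$ and $\nrmRd v2^2$ by Gagliardo--Nirenberg, but more directly I would fix $v\in\mathcal D^{1,2}(\R^d)$ smooth and compactly supported, use $\nrmRd vq\le|{\rm supp}\,v|^{1/q-1/2^*}\nrmRd v{2^*}$, apply Sobolev's inequality~\eqref{Ineq:Sobolev}, and let $q\to2^*$; taking an infimizing sequence for $\mathsf S_d$ gives $\liminf_{q\to2^*}\mathsf K_{q,d}\ge\mathsf S_d$. Care is needed because the Sobolev infimum is not attained in $\H^1$, so one works with approximations supported in bounded sets and lets the support grow together with $q\to2^*$; a diagonal argument handles this.

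For $d=1$, $q\to+\infty$: one shows $\mathsf K_{q,1}\to\mathsf K_{\infty,1}=2$. The upper bound comes from the explicit near-optimizer $u(x)=e^{-|x|}$ of the Agmon inequality: $\nrmRd uq^2\to\nrmRd u\infty^2=1$ as $q\to+\infty$ while the numerator is fixed, so $\limsup_{q\to+\infty}\mathsf K_{q,1}\le(\nrmRd{u'}2^2+\nrmRd u2^2)/\nrmRd u\infty^2=2$. For the lower bound one uses that $\H^1(\R)\hookrightarrow\L^\infty(\R)$ continuously and that $\nrmRd uq\le\nrmRd u\infty^{1-2/q}\nrmRd u2^{2/q}$, so $\mathsf K_{q,1}\ge(\nrmRd{u'}2^2+\nrmRd u2^2)/(\nrmRd u\infty^{2-4/q}\nrmRd u2^{4/q})$; optimizing/normalizing and using $\nrmRd u2^{4/q}\to1$ (after a suitable normalization such as $\nrmRd u2=\nrmRd u\infty$, say) drives this down to the Agmon quotient bound $\ge2$ in the limit. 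The main obstacle in all three cases is the interchange of the $q$-limit with infima over the noncompact space $\H^1(\R^d)$ (or $\mathcal D^{1,2}$ in the Sobolev case): minimizers may escape to infinity or spread out, so one must either use fixed/compactly-supported test functions for one inequality and crude interpolation bounds for the other, or carry out a careful concentration analysis — I expect the $q\to2^*$ case, where the limiting problem genuinely loses compactness, to require the most care.
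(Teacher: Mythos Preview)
There is a genuine gap in your upper bound for the limit $q\to2_+$. You assert ``$\mathsf K_{q,d}\le1$ always'', but this is false (e.g.\ $\mathsf K_{\infty,1}=2$), and neither of your arguments gives $\limsup_{q\to2_+}\mathsf K_{q,d}\le1$. Plugging a \emph{fixed} test function $v\in\H^1(\R^d)$ into the non-scale-invariant quotient only yields
\[
\mathsf K_{q,d}\le\frac{\nrmRd{\nabla v}2^2+\nrmRd v2^2}{\nrmRd vq^2}\;\xrightarrow[q\to2_+]{}\;1+\frac{\nrmRd{\nabla v}2^2}{\nrmRd v2^2}>1\,,
\]
and your ``cleaner'' H\"older estimate has the inequality reversed: for $q>2$ and $v$ supported in a set $\Omega$ of finite measure, H\"older gives $\nrmRd v2\le|\Omega|^{\frac12-\frac1q}\nrmRd vq$, i.e.\ $\nrmRd vq\ge|\Omega|^{\frac1q-\frac12}\nrmRd v2$, which again bounds the quotient from \emph{below}, not above. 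What is missing is that the test function must be rescaled with $q$ so that the gradient term becomes negligible; a fixed $v$ cannot do this.

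The paper handles this cleanly by passing through the scale-invariant Gagliardo--Nirenberg form established in Appendix~\ref{Sec:GNS}, namely $\mathsf K_{q,d}=\vartheta^{-\vartheta}\,(1-\vartheta)^{-(1-\vartheta)}\,\mathsf K_{\rm GN}(q)$ with $\vartheta=d\,(q-2)/(2q)$. Since the optimal scaling has been absorbed, any fixed $v\in\H^1(\R^d)$ gives
\[
\mathsf K_{q,d}\le\vartheta^{-\vartheta}\,(1-\vartheta)^{-(1-\vartheta)}\,\frac{\nrmRd{\nabla v}2^{2\,\vartheta}\,\nrmRd v2^{2\,(1-\vartheta)}}{\nrmRd vq^2}\,,
\]
and the right-hand side visibly tends to $1$ as $\vartheta\to0$ and $\nrmRd vq\to\nrmRd v2$. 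The same device also simplifies the other cases: for $q\to2^*$ one takes $v=\overline u$ the Aubin--Talenti profile (truncated if $d=3,4$) and lets $\vartheta\to1$, avoiding your double limit in $(\eps,q)$; and for the lower bounds, combining the interpolation $\nrmRd vq\le\nrmRd v2^{1-\vartheta}\nrmRd v{2^*}^\vartheta$ (or $\nrmRd v{q_0}^\vartheta$ in low dimension) with Sobolev in the scale-invariant quotient yields a bound that is \emph{uniform} in $v$, so no diagonal or concentration argument is needed. Your interpolation argument for $\liminf_{q\to2_+}\mathsf K_{q,d}\ge1$ is correct and is essentially this computation; your $d=1$ endpoint argument is also fine once the lower bound is written as $\mathsf K_{q,1}\ge\mathsf K_{\infty,1}^{\,1-2/q}$.
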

\begin{proof} For any $v\in\H^1(\R^d)$ and $d\ge 3$, we have
\[
\lim_{q\to2^*}\frac{\nrmRd{\nabla v}2^2+\nrmRd v2^2}{\nrmRd vq^2}\ge\lim_{q\to2^*}\frac{\nrmRd{\nabla v}2^2}{\nrmRd vq^2}=\frac{\nrmRd{\nabla v}2^2}{\nrmRd v{2^*}^2}\ge\mathsf S_d\,,
\]
thus proving that $\lim_{q\to2^*}\mathsf K_{q,d}\ge\mathsf S_d$. On the other hand, we may use the Aubin-Talenti function
\be{Eqn:Optimal}
\overline u(x)=(1+|x|^2)^{-\frac{d-2}2}\quad\forall\,x\in\R^d
\ee
as test function for $\mathsf K_{q,d}$ if $d\ge5$, \emph{i.e.}
\[
\mathsf K_{q,d}\le\vartheta^{-\,\vartheta}\,(1-\,\vartheta)^{-\,(1-\,\vartheta)}\,\frac{\nrmRd{\nabla\overline u}2^{2\,\vartheta}\,\nrmRd{\overline u}2^{2\,(1-\,\vartheta)}}{\nrmRd{\overline u}q^2}
\]
and observe that the right-hand side converges to $\mathsf S_d$ since $\lim_{q\to2^*}\vartheta(q,d)=1$. If $d=3$ or $4$, standard additionnal truncations are needed. The case corresponding to $q\to\infty$, $d=1$ is dealt with as above.

Now we investigate the limit as $q\to 2_+$. For any $v\in\H^1(\R^d)$, we have
\[
\lim_{q\to2_+}\frac{\nrmRd{\nabla v}2^2+\nrmRd v2^2}{\nrmRd vq^2}\ge\lim_{q\to2_+}\frac{\nrmRd v2^2}{\nrmRd vq^2}=1\,,
\]
thus proving that $\lim_{q\to2_+}\mathsf K_{q,d}\ge1$, and for any $v\in\H^1(\R^d)$, the right-hand side in
\[
\mathsf K_{q,d}\le\vartheta^{-\,\vartheta}\,(1-\,\vartheta)^{-\,(1-\,\vartheta)}\,\frac{\nrmRd{\nabla v}2^{2\,\vartheta}\,\nrmRd v2^{2\,(1-\,\vartheta)}}{\nrmRd vq^2}
\]
converges to $1$ as $q\to2_+$. This completes the proof.
\end{proof}

\subsection{Stereographic projection}\label{Sec:Stereographic}

On $\S^d\subset\R^{d+1}$, we can introduce the coordinates $y=(\rho\,\phi,z)\in\R^d\times\R$ such that $\rho^2+z^2=1$, $z\in[-1,1]$, $\rho\ge0$ and $\phi\in\S^{d-1}$, and consider the stereographic projection $\Sigma:\S^d\setminus\{\mathrm N\}\to\R^d$ defined by $\Sigma(y)=x$ where, using the above notations, $x=r\,\phi$ with $r=\sqrt{(1+z)/(1-z)}$ for any $z\in[-1,1)$. In this setting the \emph{North Pole} $\mathrm N$ corresponds to $z=1$ (and is formally sent at infinity) while the \emph{equator} (corresponding to $z=0$) is sent onto the unit sphere $\S^{d-1}\subset\R^d$. Hence $x\in\R^d$ is such that $r=|x|$, $\phi=\frac x{|x|}$, and we have the useful formulae
\[
z=\frac {r^2-1}{r^2+1}=1-\frac 2{r^2+1}\;,\quad \rho=\frac {2\,r}{r^2+1}\,.
\]
With these notations in hand, we can transform any function $u$ on $\S^d$ into a function $v$ on $\R^d$ using
\[
u(y)=\big(\tfrac r\rho\big)^\frac{d-2}2\,v(x)=\big(\tfrac{r^2+1}2\big)^\frac{d-2}2\,v(x)=(1-z)^{-\frac{d-2}2}\,v(x)
\]
and a painful but straightforward computation shows that, with $\alpha_*=\frac 14\,d\,(d-2)$,
\[
\int_{\S^d}|\nabla u|^2\;d\omega+\alpha_*\int_{\S^d}|u|^2\;d\omega=\iRd{|\nabla v|^2}\quad\mbox{and}\quad\int_{\S^d}|u|^q\;d\omega=\iRd{|v|^q\,\big(\tfrac2{1+|x|^2}\big)^{d-(d-2)\frac q2}}\,.
\]
As a consequence, Inequalities~\eqref{InterpSphere} and~\eqref{InterpSphere2} are transformed respectively into
\[
\iRd{|\nabla v|^2}+4\,(\alpha-\alpha_*)\int_{\R^d}|v|^2\,\frac{dx}{(1+|x|^2)^2}\ge\mu(\alpha)\,\kappa_{q,d}\left[\,\iRd{|v|^q\,\Big(\tfrac2{1+|x|^2}\Big)^{d-(d-2)\frac q2}}\right]^\frac 2q\quad\forall\,v\in\mathcal D^{1,2}(\R^d)
\]
if $q\in(2,2^*)$ and $\alpha\ge\alpha_*$, and
\[
\iRd{|\nabla v|^2}+\beta\,\kappa_{q,d}\left[\,\iRd{|v|^q\,\Big(\tfrac2{1+|x|^2}\Big)^{d-(d-2)\frac q2}}\right]^\frac 2q\ge 4\,(\nu(\beta)+\alpha_*)\int_{\R^d}|v|^2\,\frac{dx}{(1+|x|^2)^2}\quad\forall\,v\in\mathcal D^{1,2}(\R^d)
\]
if $q\in(1,2)$ and $\beta>0$.

\subsection{Sobolev's inequality: expression of the constant and references}\label{Sec:Sobolev}

The proof that Sobolev's inequality \eqref{Ineq:Sobolev} becomes an equality if and only if $u=\overline u$ given by~\eqref{Eqn:Optimal} up to a multiplication by a constant, a translation and a scaling is due to T.~Aubin and G.~Talenti: see \cite{MR0448404,MR0463908}. However, G.~Rosen in \cite{MR0289739} showed (by linearization) that the function given by \eqref{Eqn:Optimal} is a local minimum when $d=3$ and computed the critical value.

Much earlier, G.~Bliss in \cite{bliss1930integral} (also see \cite{Hardy01011930}) established that, among radial functions, the following inequality holds
\[
\(\iRd{|f|^p\,|x|^{r+1-d-p}}\)^\frac 2p\le\mathsf C_{\rm Bliss}\iRd{|\nabla f|^2\,|x|^{1-d}}
\]
when $r=\frac p2-1$. With the change of variables $f(x)=v\(|x|^{-\frac1{d-2}}\,\tfrac x{|x|}\)$, the inequality is changed into
\[
\(\iRd{|v|^\frac{2d}{d-2}}\)^\frac{d-2}d\le\frac{\mathsf C_{\rm Bliss}}{(d-2)^{2\frac{d-1}d}}\iRd{|\nabla v|^2}
\]
if $p=2^*$ and it is a straightforward consequence of \cite{bliss1930integral} that the equality is achieved with $v=\overline u$.

\bigskip According to the duplication formula (see for instance \cite{MR0167642}) for the $\Gamma$ function, we know that
\[
\Gamma(x)\,\Gamma\(x+\tfrac 12\)=2^{1-\,2\,x}\,\sqrt\pi\,\Gamma(2\,x)\,.
\]
As a consequence, the best constant in Sobolev's inequality \eqref{Ineq:Sobolev} can be written either as
\[
\mathsf S_d=\frac 4{d\,(d-2)\,|\mathbb \S^d|^{2/d}}
\]
where the surface of the $d$-dimensional unit sphere is given by $|\mathbb \S^d|=2\,\pi^\frac{d+1}2/\Gamma\(\frac{d+1}2\)$ (see for instance \cite{MR1230930}), or as
\[
\mathsf S_d=\frac 1{\pi\,d\,(d-2)}\,\Big(\tfrac{\Gamma(d)}{\Gamma\(\frac d2\)}\Big)^\frac 2d
\]
according to \cite{MR0448404,bliss1930integral,MR0289739,MR0463908}. This last expression can easily be recovered using the fact that optimality in~\eqref{Ineq:Sobolev} is achieved by $\overline u$ defined in \eqref{Eqn:Optimal}, while the first one, namely $1/\mathsf S_d=\frac14\,d\,(d-2)\,\kappa_{2^*,d}$, is an easy consequence of the stereographic projection and the computations of Section~\ref{Sec:Stereographic} with $\alpha=\alpha_*$ and $q=2^*$.

\subsection{A proof of \texorpdfstring{\eqref{Eq:KLT-GNS}}{Eq:KLT-GNS}}\label{Sec:GNS-KLT}

Assume that $q>2$ and let us relate the optimal constant $\L_{\gamma,d}^1$ in the one bound state Keller-Lieb-Thirring inequality~\eqref{Lgamma,d} with the optimal constant $\mathsf K_{q,d}$ in the Gagliardo-Nirenberg-Sobolev inequality~\eqref{Ineq:GNS}. In this case, recall that $p=\frac q{q-2}=\gamma+\frac d2$. For any nonnegative function $\phi$ defined on $\R^d$ such that $\nrmRd\phi p=\mathsf K_{q,d}$, using H\"older's inequality we can write that
\[
\iRd{\(|\nabla v|^2-\phi\,|v|^2\)}\ge\nrmRd{\nabla v}2^2-\nrmRd\phi p\,\nrmRd vq^2
\]
for any $v\in\H^1(\R^d)$. Using~\eqref{Ineq:GNS}, namely
\[
\nrmRd{\nabla v}2^2-\mathsf K_{q,d}\,\nrmRd vq^2\ge-\,\nrmRd v2^2\,,
\]
this proves that
\be{Ineq:Normalized}
|\lambda_1(-\Delta-\phi)|\le1\quad\forall\,\phi\in\L^p(\R^d)\quad\mbox{such that}\quad\nrmRd\phi p=\mathsf K_{q,d}\,.
\ee
Next one can observe that inequality~\eqref{Lgamma,d} can be rephrased as
\[
\L_{\gamma,d}^1=\sup_{\phi\in\L^p(\S^d)}\sup_{v\in\H^1(\R^d)\setminus\{0\}}\big(\mathcal R[v,\phi]\big)^\gamma\quad\mbox{with}\quad\mathcal R[v,\phi]:=\frac{\iRd{\(\phi\,|v|^2-|\nabla v|^2\)}}{\nrmRd v2^2\,\nrmRd\phi p^\frac{2\,p}{2\,p-d}}
\]
where $p=\gamma+d/2$
so that the exponent $\frac{2\,p}{2\,p-d}$ is precisely the one for which we get the scaling invariance of~$\mathcal R$. Indeed, with $v_\lambda(x):=v(\lambda\,x)$ and $\phi_\lambda(x):=\phi(\lambda\,x)$, we get that $\mathcal R[v_\lambda,\lambda^2\,\phi_\lambda]=\mathcal R[v,\phi]$ for any $\lambda>0$. Hence we find that
\[
\sup_{v\in\H^1(\R^d)\setminus\{0\}}\mathcal R[v,\phi]=\frac{|\lambda_1(-\Delta-\phi)|}{\nrmRd\phi p^\frac{2\,p}{2\,p-d}}=\sup_{v\in\H^1(\R^d)\setminus\{0\}}\mathcal R[v_\lambda,\lambda^2\,\phi_\lambda]=\frac{|\lambda_1(-\Delta-\lambda^2\,\phi_\lambda)|}{\nrmRd{\lambda^2\,\phi_\lambda}p^\frac{2\,p}{2\,p-d}}
\]
and if we choose $\lambda$ such that
\[
\lambda^\frac{2\,p-d}p\,\nrmRd\phi p=\nrmRd{\lambda^2\,\phi_\lambda}p=\mathsf K_{q,d}\,,
\]
we obtain
\[
\frac{|\lambda_1(-\Delta-\phi)|}{\nrmRd\phi p^\frac{2\,p}{2\,p-d}}\le\frac 1{\mathsf K_{q,d}^\frac{2\,p}{2\,p-d}}
\]
using~\eqref{Ineq:Normalized}, which proves that $\L_{\gamma,d}^1\le\(\mathsf K_{q,d}\)^{-\,p}$ with $p=\gamma+\frac d2$. Since optimality can be preserved at each step, this actually proves~\eqref{Eq:KLT-GNS}.

See \cite{MR0121101,Lieb-Thirring76,MR1923362,MR2048612,Benguria-Loss03,MR2253013} for further details. In the Euclidean case, notice that the equivalence can be extended to the case of systems on the one hand and to Lieb-Thirring inequalities on the other hand: see \cite{Lieb-Thirring76,Lieb84,MR2253013}.

\subsection{A proof of \texorpdfstring{\eqref{Eq:KLT-GNS2}}{Eq:KLT-GNS2}}\label{Sec:GNS-KLT2}

As in \cite{MR2253013}, we can also relate $\L_{-\gamma,d}^1$ and $ \mathsf K^*_{q,d}$ when $q=2\,\frac{2\,\gamma-d}{2\,\gamma-d+2}$ takes values in $(0,2)$. The method is similar to that of Appendix~\ref{Sec:GNS-KLT}. For any function $v\in\H^1(\R^d)$ such that $v^q$ is integrable and any positive potential $\phi$ such that $\phi^{-1}$ is in $\L^p(\R^d)$ with $p=q/(2-q)$, we can use H\"older's inequality as in the proof of Theorem~\ref{Thm2} and get
\[
\iRd{\(|\nabla v|^2+\phi\,|v|^2\)}\ge\nrmRd{\nabla v}2^2+\frac{\nrmRd vq^2}{\nrmRd{\phi^{-1}}p}\,.
\]
Using~\eqref{GNS2}, namely $\nrmRd{\nabla v}2^2+\nrmRd vq^2\ge\mathsf K^*_{q,d}\,\nrmRd v2^2$, this proves that
\[\label{Ineq:Normalized2}
\lambda_1(-\Delta+\phi)\ge \mathsf K^*_{q,d}\quad\forall\,\phi\in\L^p(\R^d)\quad\mbox{such that}\quad\nrmRd{\phi^{-1}}p=1\,.
\]
Inequality~\eqref{L-gamma,d} can be rephrased as
\[
\L_{-\,\gamma,d}^1=\sup_{\phi\in\L^p(\S^d)}\sup_{v\in\H^1(\R^d)\setminus\{0\}}\(\mathcal R[v,\phi]\)^{-\gamma}\quad\mbox{with}\quad\mathcal R[v,\phi]:=\frac{\iRd{\(|\nabla v|^2+\phi\,|v|^2\)}}{\nrmRd v2^2}\,\nrmRd{\phi^{-1}}p^{p/\gamma}
\]
with $\gamma=p+\frac d2$. The same scaling as in Appendix~\ref{Sec:GNS-KLT} applies: with $v_\lambda(x):=v(\lambda\,x)$ and $\phi_\lambda(x):=\phi(\lambda\,x)$, we get that $\mathcal R[v_\lambda,\lambda^2\,\phi_\lambda]=\mathcal R[v,\phi]$ for any $\lambda>0$ and hence
\[
\L_{-\,\gamma,d}^1=\(\mathsf K^*_{q,d}\)^{-\gamma}\,,
\]
which completes the proof of \eqref{Eq:KLT-GNS2}.

\bigskip\noindent{\bf Acknowledgements.} J.D.~and M.J.E.~have been partially supported by ANR grants \emph{CBDif} and \emph{NoNAP}. They thank the Mittag-Leffler Institute, where part of this research was carried out, for hospitality.\\[6pt]
{\sl\small \copyright~2013 by the authors. This paper may be reproduced, in its entirety, for non-commercial purposes.}

\begin{thebibliography}{10}

\bibitem{MR0167642}
{\sc M.~Abramowitz and I.~A. Stegun}, {\em Handbook of mathematical functions
  with formulas, graphs, and mathematical tables}, vol.~55 of National Bureau
  of Standards Applied Mathematics Series, U.S. Government Printing Office,
  Washington, D.C., 1964.

\bibitem{MR0448404}
{\sc T.~Aubin}, {\em Probl\`emes isop\'erim\'etriques et espaces de {S}obolev},
  J. Differential Geometry, 11 (1976), pp.~573--598.

\bibitem{MR2213477}
{\sc D.~Bakry}, {\em Functional inequalities for {M}arkov semigroups}, in
  Probability measures on groups: recent directions and trends, Tata Inst.
  Fund. Res., Mumbai, 2006, pp.~91--147.

\bibitem{MR1412446}
{\sc D.~Bakry and M.~Ledoux}, {\em Sobolev inequalities and {M}yers's diameter
  theorem for an abstract {M}arkov generator}, Duke Math. J., 85 (1996),
  pp.~253--270.

\bibitem{MR1230930}
{\sc W.~Beckner}, {\em Sharp {S}obolev inequalities on the sphere and the
  {M}oser-{T}rudinger inequality}, Ann. of Math. (2), 138 (1993), pp.~213--242.

\bibitem{Benguria-Loss03}
{\sc R.~D. Benguria and M.~Loss}, {\em Connection between the {L}ieb-{T}hirring
  conjecture for {S}chr\"odinger operators and an isoperimetric problem for
  ovals on the plane}, in Partial differential equations and inverse problems,
  vol.~362 of Contemp. Math., Amer. Math. Soc., Providence, RI, 2004,
  pp.~53--61.

\bibitem{MR2564058}
{\sc A.~Bentaleb and S.~Fahlaoui}, {\em Integral inequalities related to the
  {T}chebychev semigroup}, Semigroup Forum, 79 (2009), pp.~473--479.

\bibitem{MR2641798}
\leavevmode\vrule height 2pt depth -1.6pt width 23pt, {\em A family of integral
  inequalities on the circle {${\mathbb S}^1$}}, Proc. Japan Acad. Ser. A Math.
  Sci., 86 (2010), pp.~55--59.

\bibitem{BV-V}
{\sc M.-F. Bidaut-V{\'e}ron and L.~V{\'e}ron}, {\em Nonlinear elliptic
  equations on compact {R}iemannian manifolds and asymptotics of {E}mden
  equations}, Invent. Math., 106 (1991), pp.~489--539.

\bibitem{bliss1930integral}
{\sc G.~Bliss}, {\em {An integral inequality}}, Journal of the London
  Mathematical Society, 1 (1930), p.~40.

\bibitem{Brouttelande-03b}
{\sc C.~Brouttelande}, {\em The best-constant problem for a family of
  {G}agliardo-{N}irenberg inequalities on a compact {R}iemannian manifold},
  Proc. Edinb. Math. Soc. (2), 46 (2003), pp.~117--146.

\bibitem{Brouttelande-03a}
\leavevmode\vrule height 2pt depth -1.6pt width 23pt, {\em On the second best
  constant in logarithmic {S}obolev inequalities on complete {R}iemannian
  manifolds}, Bull. Sci. Math., 127 (2003), pp.~292--312.

\bibitem{1005}
{\sc J.~Dolbeault and M.~J. Esteban}, {\em Extremal functions for
  {C}affarelli-{K}ohn-{N}irenberg and logarithmic {H}ardy inequalities},
  Proceedings of the Royal Society of Edinburgh, Section: A Mathematics, 142
  (2012), pp.~745--767.

\bibitem{DEKL}
{\sc J.~Dolbeault, M.~J. Esteban, M.~Kowalczyk, and M.~Loss}, {\em Sharp
  interpolation inequalities on the sphere : new methods and consequences},
  Chin. Ann. Math. Series B, 34 (2013), pp.~1--14.

\bibitem{MR2253013}
{\sc J.~Dolbeault, P.~Felmer, M.~Loss, and E.~Paturel}, {\em Lieb-{T}hirring
  type inequalities and {G}agliardo-{N}irenberg inequalities for systems}, J.
  Funct. Anal., 238 (2006), pp.~193--220.

\bibitem{federbush1969partially}
{\sc P.~Federbush}, {\em {Partially alternate derivation of a result of
  Nelson}}, Journal of Mathematical Physics, 10 (1969), pp.~50--52.

\bibitem{45.0702.01}
{\sc P.~Funk}, {\em {Beitr\"age zur Theorie der Kegelfunktionen}}, Math. Ann.,
  77 (1915), pp.~136--162.

\bibitem{Hardy01011930}
{\sc G.~H. Hardy and J.~E. Littlewood}, {\em Notes on the theory of series
  (xii): On certain inequalities connected with the calculus of variations},
  Journal of the London Mathematical Society, s1-5 (1930), pp.~34--39.

\bibitem{46.0632.02}
{\sc E.~Hecke}, {\em {\"Uber orthogonal-invariante Integralgleichungen}}, Math.
  Ann., 78 (1917), pp.~398--404.

\bibitem{MR121812}
{\sc A.~A. Ilyin}, {\em Lieb-{T}hirring inequalities on the {$N$}-sphere and in
  the plane, and some applications}, Proc. London Math. Soc. (3), 67 (1993),
  pp.~159--182.

\bibitem{MR2879309}
{\sc A.~A. Ilyin}, {\em Lieb-{T}hirring inequalities on some manifolds}, J.
  Spectr. Theory, 2 (2012), pp.~57--78.

\bibitem{MR0121101}
{\sc J.~B. Keller}, {\em Lower bounds and isoperimetric inequalities for
  eigenvalues of the {S}chr\"odinger equation}, J. Mathematical Phys., 2
  (1961), pp.~262--266.

\bibitem{MR1813804}
{\sc M.~Ledoux}, {\em The geometry of {M}arkov diffusion generators}, Ann. Fac.
  Sci. Toulouse Math. (6), 9 (2000), pp.~305--366.
\newblock Probability theory.

\bibitem{MR2213030}
{\sc D.~Levin}, {\em On some new spectral estimates for {S}chr\"odinger-like
  operators}, Cent. Eur. J. Math., 4 (2006), pp.~123--137.

\bibitem{MR1454250}
{\sc D.~Levin and M.~Solomyak}, {\em The {R}ozenblum-{L}ieb-{C}wikel inequality
  for {M}arkov generators}, J. Anal. Math., 71 (1997), pp.~173--193.

\bibitem{MR0407909}
{\sc E.~Lieb}, {\em Bounds on the eigenvalues of the {L}aplace and
  {S}chroedinger operators}, Bull. Amer. Math. Soc., 82 (1976), pp.~751--753.

\bibitem{Lieb-Thirring76}
{\sc E.~Lieb and W.~Thirring}, {\em E. Lieb, B. Simon, A. Wightman Eds.},
  Princeton University Press, 1976, ch.~Inequalities for the moments of the
  eigenvalues of the Schr{\"o}dinger Hamiltonian and their relation to Sobolev
  inequalities, pp.~269--303.

\bibitem{MR717827}
{\sc E.~H. Lieb}, {\em Sharp constants in the {H}ardy-{L}ittlewood-{S}obolev
  and related inequalities}, Ann. of Math. (2), 118 (1983), pp.~349--374.

\bibitem{Lieb84}
\leavevmode\vrule height 2pt depth -1.6pt width 23pt, {\em On characteristic
  exponents in turbulence}, Commun. Math. Phys., 92 (1984), pp.~473--480.

\bibitem{MR674060}
{\sc C.~E. Mueller and F.~B. Weissler}, {\em Hypercontractivity for the heat
  semigroup for ultraspherical polynomials and on the {$n$}-sphere}, J. Funct.
  Anal., 48 (1982), pp.~252--283.

\bibitem{MR2639180}
{\sc E.~M. Ouhabaz and C.~Poupaud}, {\em Remarks on the
  {C}wikel-{L}ieb-{R}ozenblum and {L}ieb-{T}hirring estimates for
  {S}chr\"odinger operators on {R}iemannian manifolds}, Acta Appl. Math., 110
  (2010), pp.~1449--1459.

\bibitem{MR0289739}
{\sc G.~Rosen}, {\em Minimum value for {$c$} in the {S}obolev inequality
  {$\|\phi\sp{3}\Vert \leq c\,\|\nabla \phi\Vert \sp{3}$}}, SIAM J. Appl.
  Math., 21 (1971), pp.~30--32.

\bibitem{Rothaus-81}
{\sc O.~S. Rothaus}, {\em Logarithmic {S}obolev inequalities and the spectrum
  of {S}chr\"odinger operators}, J. Funct. Anal., 42 (1981), pp.~110--120.

\bibitem{MR0463908}
{\sc G.~Talenti}, {\em Best constant in {S}obolev inequality}, Ann. Mat. Pura
  Appl. (4), 110 (1976), pp.~353--372.

\bibitem{MR1923362}
{\sc E.~J.~M. Veling}, {\em Lower bounds for the infimum of the spectrum of the
  {S}chr\"odinger operator in {$\mathbb R\sp N$} and the {S}obolev
  inequalities}, J. Inequal. Pure Appl. Math., 3 (2002), pp.~Article 63, 22 pp.
  (electronic).

\bibitem{MR2048612}
\leavevmode\vrule height 2pt depth -1.6pt width 23pt, {\em Corrigendum on
  \cite{MR1923362}}, J. Inequal. Pure Appl. Math., 4 (2003), pp.~Article 109, 2
  pp. (electronic).

\end{thebibliography}

\end{document}